\newcommand{\Z}{{\mathbb Z}}
\newcommand{\sll}{\widehat{\mathfrak sl}_\ell}
\newcommand{\affS}[1]{\widehat{S_{#1}}}
\newtheorem{theorem}{Theorem}[subsection]
\newtheorem{corollary}[theorem]{Corollary}
\newtheorem{proposition}[theorem]{Proposition}
\newtheorem{lemma}[theorem]{Lemma}
\theoremstyle{definition}
\newtheorem{definition}[theorem]{Definition}
\newtheorem{example}[theorem]{Example}
\theoremstyle{remark}
\newtheorem{remark}[theorem]{Remark}
\numberwithin{equation}{section}
\title{ $(\ell,0)$-Carter partitions, their crystal-theoretic behavior and generating function}
\author{Chris Berg \thanks{Supported in part by NSF grant DMS-0135345}\\
\small Department of Mathematics\\[-0.8ex]
\small Davis, CA, 95616 USA\\[-0.8ex]
\small \texttt{berg@math.ucdavis.edu} \and Monica Vazirani\thanks{Supported in part by NSF grant DMS-0301320}\\
\small Department of Mathematics\\[-0.8ex]
\small Davis, CA, 95616 USA\\[-0.8ex]
\small \texttt{vazirani@math.ucdavis.edu}}
\begin{document}
\maketitle

\newlength\cellsize \setlength\cellsize{18\unitlength}
\savebox2{%
\begin{picture}(18,18)
\put(0,0){\line(1,0){18}}
\put(0,0){\line(0,1){18}}
\put(18,0){\line(0,1){18}}
\put(0,18){\line(1,0){18}}
\end{picture}}
\newcommand\cellify[1]{\def\thearg{#1}\def\nothing{}%
\ifx\thearg\nothing
\vrule width0pt height\cellsize depth0pt\else
\hbox to 0pt{\usebox2\hss}\fi%
\vbox to 18\unitlength{
\vss
\hbox to 18\unitlength{\hss$#1$\hss}
\vss}}
\newcommand\tableau[1]{\vtop{\let\\=\cr
\setlength\baselineskip{-16000pt}
\setlength\lineskiplimit{16000pt}
\setlength\lineskip{0pt}
\halign{&\cellify{##}\cr#1\crcr}}}
\savebox3{%
\begin{picture}(15,15)
\put(0,0){\line(1,0){15}}
\put(0,0){\line(0,1){15}}
\put(15,0){\line(0,1){15}}
\put(0,15){\line(1,0){15}}
\end{picture}}
\newcommand\expath[1]{%
\hbox to 0pt{\usebox3\hss}%
\vbox to 15\unitlength{
\vss
\hbox to 15\unitlength{\hss$#1$\hss}
\vss}}


\begin{abstract}
In this paper we give an alternate combinatorial description of the ``$(\ell,0)$-Carter partitions'' (see \cite{F}). The representation-theoretic significance of these partitions is that they indicate the irreducibility of the corresponding specialized Specht module over the Hecke algebra of the symmetric group (see \cite{JM}). Our main theorem is the equivalence of our combinatoric and the one introduced by James and Mathas (\cite{JM}), which is in terms of hook lengths.  
We use our result to find a generating series which counts such partitions, with respect to the statistic of a partition's first part. We then apply our description of these partitions to the crystal graph $B(\Lambda_0)$ of the basic representation of $\widehat{\mathfrak{sl}_{\ell}}$, whose nodes are labeled by $\ell$-regular partitions. Here we give a fairly simple crystal-theoretic rule which generates all  $(\ell,0)$-Carter partitions in the graph $B(\Lambda_0)$.
\end{abstract}

\section{Introduction}
\subsection{Preliminaries}\label{prelim}
Let $\lambda$ be a partition of $n$ and $\ell \geq 2$ be an integer. We will use the convention $(x,y)$ to denote the box which sits in the $x^{\textrm{th}}$ row and the $y^{\textrm{th}}$ column of the Young diagram of $\lambda$. Throughout this paper, all of our partitions are drawn in English notation. $\mathcal{P}$ will denote the set of all partitions. An \textit{$\ell$-regular partition} is one in which no nonzero part occurs $\ell$ or more times. The length of a partition $\lambda$ is defined to be the number of nonzero parts of $\lambda$ and is denoted $len(\lambda)$.

The \textit{hook length} of the $(a,c)$ box of $\lambda$ is defined to be the number of boxes to the right of or below the box $(a,c)$, including the box $(a,c)$ itself. It is denoted \textit{$h_{(a,c)}^{\lambda}$}.

The rim of $\lambda$ are those boxes at the ends of their rows or columns. An \textit{$\ell$-rim hook} is a connected sequence of $\ell$ boxes in the rim. It is \textit{removable} if when it is removed from $\lambda$, the remaining diagram is the Young diagram of some other (non-skew) partition. To lighten notation, we will abbreviate and call a removable $\ell$-rim hook an $\ell$-rim hook.

A partition which has no removable $\ell$-rim hooks is called an \textit{$\ell$-core}. The set of all $\ell$-cores is denoted $\mathcal{C}_{\ell}$. 

\begin{remark}\label{divisibility}
A necessary and sufficient condition that $\lambda$ be an $\ell$-core is that $\ell \nmid h_{(a,c)}^\lambda$ for all $(a,c) \in \lambda$ (see \cite{JK}).
\end{remark}

 Every partition has a well defined $\ell$-core, which is obtained by successively removing any possible $\ell$-rim hooks. The $\ell$-core is uniquely determined from the partition, independently of choice of  the order in which one successively removes $\ell$-rim hooks. The number of $\ell$-rim hooks which must be removed from a partition $\lambda$ to obtain its core is called the \textit{weight} of $\lambda$. See \cite{JK} for more details.

Removable $\ell$-rim hooks whose boxes all sit in one row will be called \textit{horizontal $\ell$-rim hooks}. Equivalently, they are also commonly called $\ell$-rim hooks with leg length 0, or $\ell$-ribbons with spin 0. Removable $\ell$-rim hooks which are not horizontal will be called \textit{non-horizontal $\ell$-rim hooks}.
\begin{definition}\label{lpartition}
An \textit{$\ell$-partition} is a partition $\lambda$ such that:
\begin{itemize}
\item $\lambda$ has no non-horizontal $\ell$-rim hooks;
\item when any number of horizontal $\ell$-rim hooks are removed from $\lambda$, the remaining diagram has no non-horizontal $\ell$-rim hooks. 
\end{itemize}
\end{definition}
We remark that an $\ell$-partition is necessarily $\ell$-regular.
\begin{example}  Any $\ell$-core is also an $\ell$-partition. \end{example}
\begin{example}
$(5,4,1)$ is a 6-core, hence a 6-partition. It is a 2-partition, but not a 2-core. It is not a 3-, 4-, 5- or 7-partition. It is an $\ell$-core for $\ell > 7$.
$$ 
	\tableau{ \mbox{} &  \mbox{} &  \mbox{} &  \mbox{} &  \mbox{}\\
	 \mbox{} &  \mbox{} &  \mbox{} &  \mbox{}\\
	 \mbox{}\\}
$$
\end{example}
To understand the representation-theoretic significance of $\ell$-partitions, it is necessary to introduce the Hecke algebra of the symmetric group.

\begin{definition}
For a fixed field $\mathbb{F}$ and $0 \neq q \in \mathbb{F}$, the finite Hecke algebra $H_n(q)$ is defined to be the algebra over $\mathbb{F}$ generated by $T_1, ... ,T_{n-1}$ with relations

$$\begin{array}{ll} 	T_i  T_j = T_j  T_i    				& \textrm{for $|i-j|>1$}\\
			T_i  T_{i+1}  T_i = T_{i+1}  T_i  T_{i+1} 	& \textrm{for $i < n-1$}\\
			T_i^2 = (q-1)T_i + q				& \textrm{for $i \leq n-1$}.\\

\end{array}$$
\end{definition}
In this paper we will always assume that $q \neq 1$, that $q \in \mathbb{F}$ is a primitive $\ell^{th}$ root of unity (so necessarily $\ell \geq 2$) and that the characteristic of $\mathbb{F}$ is zero.

Similar to the symmetric group, a construction of the Specht module $S^{\lambda} = S^{\lambda}[q]$ exists for $H_n(q)$ (see \cite{DJ}).
For $k \in \mathbb{Z}$, let
$$\nu_{\ell}(k) = \left\{ 	\begin{array}{ll}
			  1 &  \textrm{ $\ell \mid k$}\\
			  0 &  \textrm{ $\ell \nmid k$}. 
			\end{array} \right.$$ 
It is known that the Specht module $S^{\lambda}$ indexed by an $\ell$-regular partition $\lambda$  is irreducible  if and only if 
$$\begin{array}{lccrr} (\star) &  \nu_{\ell}(h_{(a,c)}^\lambda) = \nu_{\ell}(h_{(b,c)}^\lambda) & \textrm{for all pairs $(a,c)$, $(b,c) \in \lambda$} \end{array}$$
(see \cite{JM} Theorem 4.12). Partitions which satisfy $(\star)$ have been called in the literature $(\ell,0)$-Carter partitions.    So, a necessary and sufficient condition for the irreducibility of the Specht 
module indexed by an $\ell$-regular partition is that the hook lengths in a column of the partition $\lambda$ are either all divisible by $\ell$ or none of them are, for every column (see \cite{F}  for general partitions, when $\ell \geq 3$).

We remark that a Specht module $S^\lambda$ is both irreducible and projective
if and only if $\lambda$ is an $\ell$-core (one can easily see that the characterization of $\ell$-cores given in Remark \ref{divisibility} is a stronger condition than $(\star)$).

All of the irreducible representations of $H_n(q)$ have been constructed when $q$ is a primitive $\ell^{th}$ root of unity. For $\ell$-regular $\lambda$, $S^\lambda$ has a unique simple quotient, denoted $D^{\lambda}$, and all simples can be obtained in this way (see \cite{DJ} for more details). In particular $D^{\lambda} = S^{\lambda}$ if and only if $S^{\lambda}$ is irreducible and $\lambda$ is $\ell$-regular.

Let $\nu_{p}'(k) = max \{ m : \,\, p^m \mid k \} $. 
In the symmetric group setting, for a prime $p$, the requirement for the irreducibility of the Specht module indexed by a $p$-regular partition over the field $\mathbb{F}_p$ is that 
$$\begin{array}{ccrr}  \nu_{p}'(h_{(a,c)}^\lambda) = \nu_{p}'(h_{(b,c)}^\lambda) & \textrm{for all pairs $(a,c)$, $(b,c) \in \lambda$} \end{array}$$ (see \cite{JK}).

Note that $\nu_{\ell}$ is related to $\nu_{\ell}'$ in that $\nu_{\ell}(k) = \max \{m : [\ell]_z^m \mid [k]_z \}$, where $z$ is an indeterminate and $[k]_z = \frac{z^{k}-1}{z-1} \in \mathbb{C}[z]$.

From Example 2, we can see that $S^{(5,4,1)}$ is irreducible over $H_{10}(-1)$, but it is 
reducible over $\mathbb{F}_2 S_{10}$. This highlights how the problem of determining the irreducible Specht modules is different for $\mathbb{F}_p S_n$ and $H_n(q)$ where $q = e^{\frac{2 \pi i} {p}}$. This paper restricts its attention to $H_n(q)$.

Because $(\ell,0)$-Carter partitions have a significant representation-the\-o\-retic interpretation,
it is natural to ask if these partitions exhibit interesting behavior 
in the crystal graph of the basic representation of $\sll$.
This crystal is a combinatorial object that, 
in addition to describing the basic representation,
 parameterizes the irreducible
representations of $H_n(q)$, $n \ge 0$ and encodes various
representation-theoretic subtleties.  
The nodes of the crystal can be labeled by $\ell$-regular partitions
and edges encode partial information about the functors of restriction and
induction. 

By way of analogy, 
in the crystal the $\ell$-cores are exactly the extremal nodes,
or in other words given by the orbit of the highest weight node
under the action of $\affS{\ell}$, the affine symmetric group.
The $(\ell,0)$-Carter partitions do not behave as nicely with respect to the
$\affS{\ell}$-action, but do share many similarities with 
$\ell$-cores from this point of view. The theorems of Section \ref{crystal}
explain precisely how.

We remark that the crystal does not depend on the characteristic of
the underlying field that $H_n(q)$ is defined over,
but the characterization of $(\ell,0)$-Carter partitions does.
Thus we expect some inherent asymmetry in the behavior 
of these partitions in the crystal, which we indeed see.
The pattern was also interesting in its own right, so worth including
just for this consideration. 

\subsection{Outline}
Here we summarize the main results of this paper. Section \ref{new_def_ell_part} shows the equivalence of $\ell$-partitions and $(\ell,0)$-Carter partitions (see Theorem \ref{maintheorem}). Section \ref{?} gives a different classification of $\ell$-partitions  which allows us to give an explicit formula for a generating function for the number of $\ell$-partitions with respect to the statistic of a partition's first part.  In Section \ref{crystal} we describe the crystal-theoretic behavior of $\ell$-partitions. There we explain where in the crystal graph $B(\Lambda_0)$ one can expect to find $\ell$-partitions (see Theorems \ref{top_and_bottom},  \ref{other_cases} and \ref{second_from_bottom}). 
Section \ref{new_proof} gives a representation-theoretic proof of Theorem \ref{top_and_bottom}.
Finally, in Section \ref{conclusion}, we mention how our results can be generalized  to all  Specht modules (not necessarily those indexed by $\ell$-regular partitions) which stay irreducible at a primitive $\ell^{th}$ root of unity (for $\ell >2$), which relies on recent results of Fayers (see \cite{F}) and Lyle (see \cite{L}).

\section{$\ell$-partitions}\label{new_def_ell_part}
In this section, we prove that a partition is an $\ell$-partition if and only if it satisfies $(\star)$. To prove this, we will first need two lemmas which tell us when we can add/remove horizontal $\ell$-rim hooks to/from a diagram. Henceforth, we will no longer use the term ``$(\ell,0)$-Carter partition" when referring to condition $(\star)$. 
\subsection{Equivalence of the combinatorics}
\begin{lemma}\label{lemma1}
Suppose $\lambda$ is a partition which does not satisfy ($\star$), and that $\mu$ is a partition obtained by adding a horizontal $\ell$-rim hook to $\lambda$. Then $\mu$ does not satisfy ($\star$).
\end{lemma}
\begin{proof} If $\lambda$ does not satisfy ($\star$), it means that somewhere in the partition there are two boxes $(a,c)$ and $(b,c)$ with $\ell$ dividing exactly one of $h_{(a,c)}^{\lambda}$ and $h_{(b,c)}^{\lambda}$. We will assume $a < b$. Here we prove the lemma in the case where $\ell \mid h_{(a,c)}^{\lambda}$ and $\ell \nmid h_{(b,c)}^{\lambda}$, the other case being similar. 

\subsubsection*{Case 1}
It is easy to see that adding a horizontal $\ell$-rim hook in row $i$ for $i < a$ or $a<i<b$ will not change the hook lengths in the boxes $(a,c)$ and $(b,c)$. In other words, $h_{(a,c)}^{\lambda}$ = $h_{(a,c)}^{\mu}$ and $h_{(b,c)}^{\lambda} = h_{(b,c)}^{\mu}$. 

\subsubsection*{Case 2}
 If the horizontal $\ell$-rim hook is added to row $a$, then $h_{(a,c)}^{\lambda} + \ell = h_{(a,c)}^{\mu}$ and $h_{(b,c)}^{\lambda} = h_{(b,c)}^{\mu}$.  Similarly if the new horizontal $\ell$-rim hook is added in row $b$, $h_{(a,c)}^{\lambda} = h_{(a,c)}^{\mu}$ and $h_{(b,c)}^{\lambda} + \ell = h_{(b,c)}^{\mu}$. Still, $\ell \mid h_{(a,c)}^{\mu}$ and $\ell \nmid h_{(b,c)}^{\mu}$.

\subsubsection*{Case 3}
 Suppose the horizontal $\ell$-rim hook is added in row $i$ with $i>b$. If the box $(i,c)$ is not in the added $\ell$-rim hook then $h_{(a,c)}^{\lambda}$ = $h_{(a,c)}^{\mu}$ and $h_{(b,c)}^{\lambda} = h_{(b,c)}^{\mu}$. If the box $(i,c)$ is in the added $\ell$-rim hook, then there are two sub-cases to consider. 
 If $(i,c)$ is the rightmost box of the added $\ell$-rim hook then $\ell \mid h_{(a,c- \ell + 1)}^{\mu}$ and $\ell \nmid h_{(b,c- \ell + 1)}^{\mu}$. 
 Otherwise $(i,c)$ is not at the end of the added $\ell$-rim hook, in which case $\ell \mid h_{(a,c + 1)}^{\mu}$ and $\ell \nmid h_{(b,c + 1)}^{\mu}$. 
  In all cases, $\mu$ does not satisfy $(\star)$.
\end{proof}

\begin{example}

 Let $\lambda$ = $(14,9,5,2,1)$ and $\ell = 3$. This partition does not satisfy ($\star$). For instance, looking at boxes $(2,3)$ and $(3,3)$ highlighted below, we see that $3 \mid h_{(3,3)}^{\lambda} =3$ but $3 \nmid h_{(2,3)}^{\lambda} = 8$. Let $\lambda[i]$ denote the partition obtained when adding a horizontal $\ell$-rim hook to the $i^{th}$ row of $\lambda$ (when it is still a partition). Adding a horizontal $3$-rim hook in row 1 will not change  $h_{(2,3)}^{\lambda}$ or $h_{(3,3)}^{\lambda}$ (Case 1 of Lemma \ref{lemma1}).  Adding a horizontal 3-rim hook to row 2 will make $h_{(2,3)}^{\lambda[2]} = 11$, which is congruent to $h_{(2,3)}^{\lambda}$ modulo 3 (Case 2 of Lemma \ref{lemma1}). Adding in row 3 is also Case 2. Adding a horizontal 3-rim hook to row 4 will make $h_{(2,3)}^{\lambda[4]} = 9$ and $h_{(3,3)}^{\lambda[4]}=4$, but one column to the right, we see that now $h_{(2,4)}^{\lambda[4]} = 8$ and $h_{(3,4)}^{\lambda[4]} = 3$ (Case 3 of Lemma \ref{lemma1}).
$$\begin{array}{cc} \tableau{18 & 16 & 14 & 13 & 12 & 10 & 9 & 8 & 7 & 5 & 4 & 3 & 2 & 1\\
12&10&8&7&6&4&3&2&1\\
7&5&3&2&1\\
3&1\\
1} & 
\linethickness{2.5pt}
\put (-226,-37){\line(0,1){37}}
\put (-208,-37){\line(0,1){37}}
\put (-226,-36){\line(1,0){19}}
\put (-226,-18){\line(1,0){19}}

\put (-226,-1){\line(1,0){19}}

\end{array}$$
\end{example}

\begin{lemma}\label{lemma2}
Suppose ${\lambda}$ does not satisfy ($\star$). Let $a,b,c$ be such that $\ell$ divides exactly one of $h_{(a,c)}^{\lambda}$ and $h_{(b,c)}^{\lambda}$ with $a<b$. Suppose $\nu$ is a partition obtained from $\lambda$ by removing a horizontal $\ell$-rim hook, and that $(b,c) \in \nu$. Then $\nu$ does not satisfy ($\star$).\end{lemma}
As the proof of Lemma \ref{lemma2} is similar to that of Lemma \ref{lemma1}, we leave it to the reader.

\begin{remark}\label{remark1} In the proof of Lemma \ref{lemma1} we have also shown that when adding a horizontal $\ell$-rim hook to a partition which does not satisfy $(\star)$, the violation to ($\star$) occurs in the same rows as in the original partition. It can also be shown in Lemma \ref{lemma2} that when removing a horizontal $\ell$-rim hook (in the cases above), the violation will stay in the same rows as in the original partition. This will be useful in the proof of Theorem \ref{maintheorem}.
\end{remark}
\begin{example} We illustrate here the necessity of our hypothesis that $(b,c) \in \nu$.
$\lambda = (5,4,1)$ does not satisfy ($\star$) for $\ell=3$. The boxes $(1,2)$ and $(2,2)$ are a violation of $(\star)$. Removing a horizontal 3-rim hook will give the partition $\nu = (5,1,1)$ which does satisfy ($\star$). Note that this does not violate Lemma \ref{lemma2}, since $\nu$ does not contain the box $(2,2)$.
$$\begin{array}{cccc}
	\tableau{7&5&4&3&1\\
		5&3&2&1\\
		1}&$\:\:\:\:\:\:\:\:\,\,\,\,$
		&
	\tableau{7&4&3&2&1\\
         		2\\
		1}
		&
		\linethickness{2.5pt}
		\put (-222,18){\line(1,0){18}}		
		\put (-222,0){\line(1,0){18}}
		\put (-222,-18){\line(1,0){18}}
		\put (-222,-18){\line(0,1){36}}
		\put (-204,-18){\line(0,1){36}}

		\end{array}
$$
\end{example}

\begin{theorem}\label{maintheorem}
A partition is an $\ell$-partition if and only if it satisfies ($\star$).
\end{theorem}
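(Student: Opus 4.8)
The plan is to prove the two directions of the equivalence separately, using the two lemmas and Remark~\ref{remark1} to handle the harder implication.

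\textbf{The easy direction: $\ell$-partition $\Rightarrow$ $(\star)$.} Suppose $\lambda$ does not satisfy $(\star)$. I would argue that $\lambda$ is not an $\ell$-partition by exhibiting a non-horizontal $\ell$-rim hook somewhere in the process of stripping horizontal $\ell$-rim hooks. The idea is to repeatedly remove horizontal $\ell$-rim hooks; by Lemma~\ref{lemma2} and Remark~\ref{remark1}, as long as the removals do not destroy the offending box $(b,c)$, the violation of $(\star)$ persists in the same pair of rows. Eventually we reach a partition $\mu$ (with the same core as $\lambda$) from which no horizontal $\ell$-rim hook can be removed without destroying the relevant box, yet $\mu$ still violates $(\star)$ and still has positive weight (since a partition satisfying its core would satisfy $(\star)$ — cores satisfy $(\star)$). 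Such a $\mu$ has a removable $\ell$-rim hook (it is not a core), but the obstruction to continuing means every removable $\ell$-rim hook is either non-horizontal or removes the box $(b,c)$; in the latter situation one checks directly (as in the excluded case of Lemma~\ref{lemma2}) that a non-horizontal hook must also be present. So $\lambda$, or some partition obtained from it by removing horizontal $\ell$-rim hooks, has a non-horizontal $\ell$-rim hook, i.e. $\lambda$ is not an $\ell$-partition. One must also recall that $\ell$-partitions are required to be $\ell$-regular, but a partition violating $(\star)$ could a priori be $\ell$-regular; the argument above does not use regularity, so it is fine.

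\textbf{The hard direction: $(\star)$ $\Rightarrow$ $\ell$-partition.} Suppose $\lambda$ satisfies $(\star)$. First, I expect one needs to show $\lambda$ is $\ell$-regular: if a part of $\lambda$ were repeated $\ell$ times, say rows $a, a+1, \dots, a+\ell-1$ all have the same length $c$, then the boxes $(a,c), (a+1,c), \dots, (a+\ell-1,c)$ have consecutive hook lengths $h, h-1, \dots, h-\ell+1$ running down the column (the arm length is $0$ in each and the leg lengths decrease by one), so exactly one of these $\ell$ consecutive integers is divisible by $\ell$ — comparing that box with any other box in the column violates $(\star)$ unless the column has height exactly... actually one must be a little careful, but the upshot is that $\ell$ equal parts force a violation, so $\lambda$ is $\ell$-regular. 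Next, a partition satisfying $(\star)$ can have no non-horizontal $\ell$-rim hook: this is the contrapositive content we want, and I would prove it by showing that the presence of a non-horizontal removable $\ell$-rim hook forces two boxes in a common column with hook lengths differing by something not a multiple of $\ell$ while one is a multiple of $\ell$. Concretely, a non-horizontal rim hook occupies boxes in at least two rows; examining the box at the ``inner corner'' of the rim hook and comparing hook lengths in the column where the rim hook turns should produce the violation. Finally, removing a horizontal $\ell$-rim hook from a partition satisfying $(\star)$ yields a partition that still satisfies $(\star)$ — this is essentially the converse direction of Lemma~\ref{lemma1}/\ref{lemma2} reasoning: running the case analysis of Lemma~\ref{lemma1} in reverse shows that if $\mu$ satisfies $(\star)$ and $\mu$ is obtained from $\nu$ by adding a horizontal $\ell$-rim hook, then $\nu$ satisfies $(\star)$ (equivalently, by Lemma~\ref{lemma1}, if $\nu$ violated $(\star)$ so would $\mu$). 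Combining: $\lambda$ satisfies $(\star)$, hence has no non-horizontal $\ell$-rim hook; after removing any horizontal $\ell$-rim hooks the result still satisfies $(\star)$, hence still has no non-horizontal $\ell$-rim hook; so $\lambda$ is an $\ell$-partition.

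\textbf{Main obstacle.} The delicate point is the claim that a non-horizontal removable $\ell$-rim hook always produces a violation of $(\star)$ — one has to identify the right pair of boxes in a common column, and the configuration of a rim hook that ``turns'' needs a careful local analysis (the hook length of the box just inside the turn versus boxes below it, using that the rim hook has exactly $\ell$ boxes so the turning introduces a controlled jump). The reverse of Lemma~\ref{lemma1} also requires re-examining each of Cases 1--3 to be sure no information is lost when going from $\mu$ back to $\nu$; in particular Case 3, where the added hook meets the critical column, needs the observation that the violation merely shifts columns rather than disappearing. I expect the bookkeeping in these two sub-steps, rather than any conceptual difficulty, to be where the real work lies.
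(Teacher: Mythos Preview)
Your overall strategy matches the paper's exactly: both directions rest on Lemmas~\ref{lemma1} and~\ref{lemma2} in just the way you describe, and your ``hard direction'' is logically the same as the paper's $\Leftarrow$ (they phrase it contrapositively, starting from ``not an $\ell$-partition'' and using Lemma~\ref{lemma1} to push the violation of $(\star)$ back up to $\lambda$).

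However, you have the difficulty of the two directions reversed. The step you flag as the ``main obstacle''---that a non-horizontal removable $\ell$-rim hook forces a violation of $(\star)$---is in fact a one-line observation: if the rim hook has upper-right box $(a,c)$ and lower-left box $(b,d)$ with $a<b$, then $h_{(a,d)}^{\lambda}=\ell$ while $h_{(b,d)}^{\lambda}<\ell$, and that is already a violation in column $d$. No ``inner corner'' analysis is needed. Likewise, the ``reverse of Lemma~\ref{lemma1}'' is just its contrapositive and requires no new case-checking.

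The genuine work hides in what you call the \emph{easy} direction, precisely at the phrase ``one checks directly \dots\ that a non-horizontal hook must also be present.'' This is not a routine check. The paper spends most of its proof here: assuming $b=a+1$ and $\ell\mid h_{(a,c)}$, one strips horizontal $\ell$-rim hooks via Lemma~\ref{lemma2} until the only removable horizontal hook contains $(b,c_k)$ \emph{and} row $a$ admits no horizontal removal; then a hook-length identity $h_{(a,c_k)}=h_{(b,c_k)}+h_{(a,d)}-1$ (where $h_{(b,d)}=1$) forces $h_{(a,d)}<\ell$, which exhibits an explicit non-horizontal $\ell$-rim hook spanning rows $a$ and $b$. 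There is also a separate Case~2 (when $\ell\mid h_{(b,c)}$ rather than $h_{(a,c)}$) that must be reduced to Case~1 by shifting columns. Your sketch does not address either the hook-length identity or Case~2, and these are exactly where the argument earns its keep. Redirect your effort there and the proof will go through along the same lines as the paper's.
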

\begin{proof} 
{\sloppy Suppose $\lambda$ is not an $\ell$-partition. We may remove horizontal $\ell$-rim hooks from $\lambda$ until we obtain a partition $\mu$ which has a non-horizontal $\ell$-rim hook.

}

{\sloppy
We label the upper rightmost box of the non-horizontal $\ell$-rim hook $(a,c)$ and lower leftmost box $(b,d)$ with $a < b$. 
Then $h_{(a,d)}^{\mu} = \ell$ and $h_{(b,d)}^{\mu} < \ell$, so $\mu$ does not satisfy ($\star$). 
From Lemma \ref{lemma1}, since $\lambda$ is obtained from $\mu$ by adding horizontal $\ell$-rim hooks, $\lambda$ also does not satisfy ($\star$).
}

Conversely, suppose  $\lambda$ does not satisfy ($\star$). Let $(a,c), (b,c) \in \lambda$ be such that $\ell$ divides exactly one of $h_{(a,c)}^{\lambda}$ and $h_{(b,c)}^{\lambda}$.  Let us assume that $\lambda$ is an $\ell$-partition and we will derive a contradiction.

\subsubsection*{Case 1} Suppose that $a<b$ and that $\ell \mid h_{(a,c)}^{\lambda}$. Then without loss of generality we may assume that $b=a+1$.  By the equivalent characterization of $\ell$-cores mentioned in Section \ref{prelim}, there exists at least one removable $\ell$-rim hook in $\lambda$ . By assumption it must be horizontal.
 If an $\ell$-rim hook exists which does not contain the box $(b,c)$ then let $\lambda^{(1)}$ be $\lambda$ with this $\ell$-rim hook removed. By Lemma \ref{lemma2}, since we did not remove the $(b,c)$ box, $\lambda^{(1)}$ will still not satisfy ($\star$).  Then there are boxes $(a, c_1)$ and $(b, c_1)$ for which $\ell \mid h_{(a,c_1)}^{\lambda^{(1)}}$ but $\ell \nmid h_{(b,c_1)}^{\lambda^{(1)}}$. 
By Remark \ref{remark1} above, we can assume that the violation to $(\star)$ is in the same rows $a$ and $b$ of $\lambda^{(1)}$. We apply the same process as above repeatedly until we must remove a horizontal $\ell$-rim hook from the partition $\lambda^{(k)}$ which contains the $(b,c_k)$ box, and in particular we cannot remove a horizontal $\ell$-rim hook from row $a$. Let $d$ be so that $h_{(b,d)} = 1$. Such a $d$ must exist since we can remove a horizontal $\ell$-rim hook from this row. Since $(b,c_k)$ is removed from $\lambda^{(k)}$ when we remove the horizontal $\ell$-rim hook, $h_{(b,c_k)}^{\lambda^{(k)}}  < \ell$ ($\ell$ does not divide $h_{(b,c_k)}^{\lambda^{(k)}}$ by assumption, so in particular $h_{(b,c_k)}^{\lambda^{(k)}} \neq \ell$). Note that $h_{(a,c_k)}^{\lambda^{(k)}} = h_{(b,c_k)}^{\lambda^{(k)}} + h_{(a,d)}^{\lambda^{(k)}} -1$, $\ell \mid h_{(a,c_k)}^{\lambda^{(k)}}$ and $\ell \nmid h_{(b,c_k)}^{\lambda^{(k)}}$, so $\ell \nmid (h_{(a,d)}^{\lambda^{(k)}} -1)$. If $h_{(a,d)}^{\lambda^{(k)}} -1 > \ell$ then we could remove a horizontal $\ell$-rim hook from row $a$, which we cannot do by assumption. Otherwise $h_{(a,d)}^{\lambda^{(k)}} < \ell$. Then a non-horizontal $\ell$-rim hook exists starting at the rightmost box of the $a^{th}$ row, going left to $(a,d)$, down to $(b,d)$ and then left. This is a contradiction as we have assumed that $\lambda$ was an $\ell$-partition.
\subsubsection*{Case 2} Suppose that $a<b$ and that $\ell \mid h_{(b,c)}^{\lambda}$. We will reduce this to Case 1. Without loss of generality we may assume that $b = a+1$ and that $\ell \mid h_{(n,c)}^{\lambda}$ for all $n>a$, since otherwise we are in Case 1. Let $m$ be so that $(m,c) \in \lambda$ but $(m+1,c) \not\in \lambda$. Then because $h_{(m,c)}^{\lambda} \geq \ell$, the list $h_{(a,c)}^{\lambda}, h_{(a, c+1)}^{\lambda} = h_{(a,c)}^{\lambda} -1 , \ldots ,h_{(a, c+\ell-1)}^{\lambda} = h_{(a,c)}^{\lambda} -\ell+1$ consists of $\ell$ consecutive integers. Hence one of them must be divisible by $\ell$. Suppose it is $h_{(a, c+i)}^{\lambda}$. Note $\ell \nmid h_{(m, c+i)}^{\lambda}$, since $h_{(m, c+i)}^{\lambda} = h_{(m,c)}^{\lambda} - i$ and $\ell \mid h_{(m,c)}^{\lambda}$. Then we may apply Case 1 to the boxes $(a,c+i)$ and $(m,c+i)$.

\end{proof}

\begin{remark} This result can actually be obtained using a more general result of James and Mathas (\cite{JM}, Theorem 4.20), where they classified which $S^{\lambda}$ remain irreducible for $\lambda$ $\ell$-regular. However, we have included this proof to emphasize the simplicity of the theorem and its simple combinatorial proof in this context. \end{remark}

\begin{remark}  When $q$ is a primitive $\ell^{th}$ root of unity, and $\lambda$ is an $\ell$-regular partition, the Specht module $S^{\lambda}$ of $H_n(q)$ is irreducible if and only if  $\lambda$ is an $\ell$-partition.
This follows from what was said above concerning the James and Mathas result on  the equivalence of ($\star$) and irreducibility of Specht modules, and Theorem \ref{maintheorem}.
\end{remark}

\section{Generating functions}\label{?}
Let $\mathcal{L}_{\ell}$ denote the set of $\ell$-partitions. In this section, we study the generating function of $\ell$-partitions with respect to the statistic of the first part of the partition. We thank Richard Stanley for suggesting that we compute the generating function.
\subsection{Counting $\ell$-cores}
We will count $\ell$-cores first, with respect to the statistic of the first part of the partition. Let $$C_{\ell}(x) = \sum_{k=0}^{\infty} c_k^{\ell} x^k$$ where $c_k^{\ell} =  \#\{ \lambda \in \mathcal{C}_{\ell} : \lambda_1 = k\}$. Note that this does not depend on the size of the partition, only its first part. Also, the empty partition is the unique partition with first part $0$, and is always an  $\ell$-core, so that $c_0^{\ell} =1$ for every $\ell$.

\begin{example}\label{2_cores}

For $\ell = 2$, all 2-cores are of the form $\lambda = (k,k-1,\dots,2,1)$. Hence $C_2 (x) = \sum_{k=0}^{\infty} x^k = \frac{1}{1-x}$.
\end{example}
\begin{example}
For $\ell = 3$, the first few cores are $$\emptyset, (1), (1,1), (2), (2,1,1), (2,2,1,1), \dots $$ so $C_3(x) = 1+ 2x +  3x^2 + \hdots$
\end{example}

For a partition $\lambda = (\lambda_1,...,\lambda_s)$ with $\lambda_s>0$, the \textit{$\beta$-numbers }$(\beta_1,...,\beta_s)$ of $\lambda$ are defined to be the hook lengths of the first column (i.e. $\beta_i = h_{(i,1)}^{\lambda}$). Note that this is a modified version of the $\beta$-numbers defined by James and Kerber in \cite{JK}, where all definitions in this section can be found. We draw a diagram $\ell$ columns wide with the numbers $\{0,1,2,\dots,\ell-1\}$ inserted in the first row in order, $\{ \ell, \ell +1, \dots, 2\ell-1\}$ inserted in the second row in order, etc. Then we circle all of the $\beta$-numbers for $\lambda$. The columns of this diagram are called \textit{runners}, the circled numbers are called \textit{beads}, the uncircled numbers are called \textit{gaps}, and the diagram is called an \textit{abacus} . It is well known that a partition $\lambda$ is an $\ell$-core if and only if all of the beads lie in the last $\ell -1$ runners and there is no gap above any bead.
\begin{example}

 $\lambda = (4,2,2,1,1)$ has $\beta$-numbers $8,5,4,2,1$.  In the abacus for $\ell =3$ the first runner is empty, the second runner has beads at 1 and 4, and the third runner has beads at 2, 5 and 8 (as pictured below). Hence $\lambda$ is a $3$-core.

\begin{multicols}{2}{
$\begin{array}{lr}
\lambda = &
\tableau{8 & 5 & 2 & 1\\
	5 & 2\\
	4 & 1\\
	2\\
	1}
\end{array}
$

\begin{center}
\begin{picture}(100,70)
\put (10,65) {0}
\put (40,65){1}
\put (70,65){2}
\put (10,50){3}
\put (40, 50){4}
\put (70,50){5}
\put (10,35){6}
\put (40,35){7}
\put (70,35){8}
\put (10,20){9}
\put (38,20){10}
\put (68,20){11}
\put (72.5,53){\circle{13}}
\put (72.5,68){\circle{13}}
\put (72.5,38){\circle{13}}
\put (42.5,68){\circle{13}}
\put (42.5,53){\circle{13}}
\put (42.5,12){.}
\put (42.5,8){.}
\put (42.5,4){.}
\put (72.5,12){.}
\put (72.5,8){.}
\put (72.5,4){.}
\put (11.5,12){.}
\put (11.5,8){.}
\put (11.5,4){.}
\end{picture}
\end{center}}
\end{multicols}
$\begin{array}{c} \textrm{Young diagram and abacus of $\lambda = (4,2,2,1,1)$ } \end{array}$
\end{example}
\begin{proposition}\label{ell_core_bijection}
There is a bijection between the set of $\ell$-cores with first part $k$ and the set of $(\ell-1)$-cores with first part $\leq k$.
\end{proposition}
\begin{proof}
Using the abacus description of cores, we describe our bijection as follows: 

Given an $\ell$-core with largest part $k$, remove the whole runner which contains the largest bead (the bead with the largest $\beta$-number). In the case that there are no beads, remove the rightmost runner. The remaining runners can be placed into an $\ell-1$ abacus in order. The remaining abacus will clearly have its first runner empty. This will correspond to an $(\ell - 1)$-core with largest part at most $k$. This map gives a bijection between the set of all $\ell$-cores with largest part $k$ and the set of all $(\ell-1)$-cores with largest part at most $k$.

To see that it is a bijection, we will give its inverse. Given the abacus for an $(\ell-1)$-core $\lambda$ and a $k \geq \lambda_1$, insert the new runner directly after the $k^{th}$ gap, placing a bead on it directly after the $k^{th}$ gap and at all places above that bead on the new runner.

\end{proof}

\begin{corollary}\label{ell_core_binom}
$c_k^{\ell} = \binom{k+\ell -2}{k}.$
\end{corollary}
\begin{proof}  This proof is by induction on $\ell$. For $\ell = 2$, as the only $1$-core is the empty partition, by Proposition \ref{ell_core_bijection}\, $c_k^2 = 1 = \binom{k}{k}$. Note this was also observed in Example \ref{2_cores}. For the rest of the proof, we assume that $\ell >2$. 

It follows directly from Proposition \ref{ell_core_bijection} that $$(\sharp) \,\,\,\,\,\,\,c_k^{\ell} = \sum_{j=0}^k c_j^{\ell -1}.$$ 
Recall the fact that $\binom{\ell+k-2}{k} = \binom{\ell-3}{0} + \binom{\ell-2}{1}+ \dots + \binom{\ell+k-3}{k}$ for $\ell>2$. 
Applying our inductive hypothesis to all of the terms in the right hand side of $(\sharp)$ we get that  $c_k^\ell=  \sum_{j=0}^k c_j^{\ell-1}  = \sum_{j=0}^k \binom{\ell+j-3}{j}=  \binom{\ell+k-2}{k}$. 
Therefore, the set of all $\ell$-cores with largest part $k$ has cardinality $\binom{k+\ell-2}{k}$.
\end{proof}

\begin{remark}
The bijection above between $\ell$-cores with first part $k$ and $(\ell -1)$-cores with first part $\leq k$ has several other descriptions, using different interpretations of $\ell$-cores. Together with Brant Jones, we have a paper on some of these descriptions. See \cite{BJV} for more details.
\end{remark}

\begin{example}

Let $\ell = 3$ and $\lambda = (4,2,2,1,1)$. The abacus for $\lambda$ is:
 
\begin{center}
\begin{picture}(80,80)
\put (10,65) {0}
\put (40,65){1}
\put (70,65){2}
\put (10,50){3}
\put (40, 50){4}
\put (70,50){5}
\put (10,35){6}
\put (40,35){7}
\put (70,35){8}
\put (10,20){9}
\put (38,20){10}
\put (68,20){11}
\put (72.5,53){\circle{13}}
\put (72.5,68){\circle{13}}
\put (72.5,38){\circle{13}}
\put (42.5,68){\circle{13}}
\put (42.5,53){\circle{13}}
\put (42.5,12){.}
\put (42.5,8){.}
\put (42.5,4){.}
\put (72.5,12){.}
\put (72.5,8){.}
\put (72.5,4){.}
\put (11.5,12){.}
\put (11.5,8){.}
\put (11.5,4){.}
\end{picture}
\end{center}
The largest $\beta$-number is 8. Removing the whole runner in the same column as the 8, we get the remaining diagram with runners relabeled for $\ell = 2$
\begin{center}
\begin{picture}(80,80)
\put (10,65) {0}
\put (40,65){1}
\put (70,65){}
\put (10,50){2}
\put (40, 50){3}
\put (70,50){}
\put (10,35){4}
\put (40,35){5}
\put (70,35){}
\put (10,20){6}
\put (40,20){7}
\put (68,20){}
\put (72.5,53){\circle{13}}
\put (72.5,68){\circle{13}}
\put (72.5,38){\circle{13}}
\put (42.5,68){\circle{13}}
\put (42.5,53){\circle{13}}
\Large
\put (66,64){$\times$}
\put (66,49){$\times$}
\put (66,34){$\times$}
\put (42,12){.}
\put (42,8){.}
\put (42,4){.}
\put (10.5,12){.}
\put (10.5,8){.}
\put (10.5,4){.}
\end{picture}.
\end{center}

This is the abacus for the partition $(2,1)$, which is a 2-core with largest part $\leq 4$. 
\end{example}
 From Corollary \ref{ell_core_binom} , we obtain $C_{\ell}(x) = \sum_{k \geq 0} \binom{k+\ell-2}{k} x^k$ and so conclude the following.
 
 \begin{proposition}
 $$C_{\ell}(x) = \frac{1}{(1-x)^{\ell -1}}.$$
\end{proposition}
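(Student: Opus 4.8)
The plan is to derive the closed form directly from the previously established identity $C_\ell(x) = \sum_{k \geq 0}\binom{k+\ell-2}{k} x^k$, so that the Proposition is really a generating-function repackaging of the combinatorial count $c_k^\ell = \binom{k+\ell-2}{k}$. First I would invoke the negative binomial series: for any integer $m \geq 1$,
$$\frac{1}{(1-x)^m} = \sum_{k \geq 0} \binom{k+m-1}{k} x^k,$$
which is standard (it follows, for instance, by differentiating $\frac{1}{1-x} = \sum_{k\geq 0} x^k$ repeatedly, or by induction on $m$ using the Cauchy product $\frac{1}{(1-x)^{m}} = \frac{1}{1-x}\cdot\frac{1}{(1-x)^{m-1}}$ together with the hockey-stick identity $\sum_{j=0}^{k}\binom{j+m-2}{j} = \binom{k+m-1}{k}$). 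Setting $m = \ell - 1$ gives exactly $\frac{1}{(1-x)^{\ell-1}} = \sum_{k\geq 0}\binom{k+\ell-2}{k}x^k = C_\ell(x)$.

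An alternative route, which avoids even citing the negative binomial series, is to prove the recursion at the level of generating functions. The relation $(\sharp)$, namely $c_k^\ell = \sum_{j=0}^k c_j^{\ell-1}$, says precisely that $C_\ell(x) = \frac{1}{1-x} C_{\ell-1}(x)$, since multiplying a power series by $\frac{1}{1-x}$ forms the sequence of partial sums of its coefficients. Combined with the base case $C_2(x) = \frac{1}{1-x}$ computed in the examples (2-cores are staircases), an immediate induction on $\ell$ yields $C_\ell(x) = \frac{1}{(1-x)^{\ell-1}}$. I would probably present this second argument as the main line since it reuses $(\sharp)$ directly and keeps the section self-contained, and mention the binomial-series identification as the reason the coefficients match the earlier Proposition.

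There is essentially no obstacle here — the statement is a one-line consequence of material already proved, and the only thing to be careful about is the bookkeeping of the exponent ($\ell-1$ versus $\ell$) and making sure the base case is correctly anchored at $\ell = 2$ rather than $\ell = 1$. I would write it as: "By $(\sharp)$ we have $C_\ell(x) = \frac{1}{1-x}C_{\ell-1}(x)$; since $C_2(x) = \frac{1}{1-x}$, induction gives $C_\ell(x) = \frac{1}{(1-x)^{\ell-1}}$. Equivalently, this is the generating function $\sum_{k\geq 0}\binom{k+\ell-2}{k}x^k$ by the negative binomial theorem, consistent with the previous Proposition." That completes the proof.
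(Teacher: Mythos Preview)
Your proposal is correct and matches the paper's approach: the paper simply states that from $c_k^\ell = \binom{k+\ell-2}{k}$ one obtains $C_\ell(x) = \sum_{k\geq 0}\binom{k+\ell-2}{k}x^k$ and then records the closed form, which is exactly your negative-binomial-series route. Your alternative via the functional equation $C_\ell(x) = \tfrac{1}{1-x}C_{\ell-1}(x)$ is a perfectly valid repackaging of $(\sharp)$, but the paper does not present it that way.
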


\subsection{Decomposing $\ell$-partitions}\label{construct}

We now describe a decomposition of $\ell$-partitions. We will use this to build $\ell$-partitions from $\ell$-cores and extend our generating function to $\ell$-partitions.

\begin{lemma}\label{newcores}
Let $\lambda$ be an $\ell$-core and $r>0$ an integer. Then 
\begin{enumerate}
\item $\nu = (\lambda_1+r(\ell-1), \lambda_1 + (r-1)(\ell-1), \dots, \lambda_1 + (\ell-1), \lambda_1, \lambda_2, \dots)$ is an $\ell$-core;

\item $\mu = (\lambda_2, \lambda_3, \dots)$ is an $\ell$-core.

\end{enumerate}
\end{lemma}

\begin{proof}
For $1 \leq i \leq r$, $\nu_i - \nu_{i+1} = \ell-1$, so the $i^{th}$ row of $\nu$ can never contain part of an $\ell$-rim hook. Because $\lambda$ is an $\ell$-core, $\nu$ cannot have an $\ell$-rim hook that is supported entirely on the rows below the $r^{th}$ row. Hence $\nu$ is an $\ell$-core.

For the second statement of the lemma, note the partition $\mu$ is simply $\lambda$ with its first row deleted. In particular, $h_{(a,b)}^\mu = h_{(a+1,b)}^\lambda$ for all $(a,b) \in \mu$, so that by Remark \ref{divisibility} it is an $\ell$-core.

\end{proof}


 We now construct a partition $\lambda$ from a triple of data $(\mu,r,\kappa)$ as follows.
 Let $\mu = (\mu_1, \dots , \mu_s)$ to be any $\ell$-core where $\mu_1 - \mu_2 \neq \ell - 1$. For an integer $r \geq 0$ we form a new $\ell$-core $\nu = (\nu_1, \dots \nu_r, \nu_{r+1}, \dots, \nu_{r+s})$ by 
attaching $r$ rows above $\mu$ so that:
$$\nu_r = \mu_1+ \ell-1,\, 
\nu_{r-1} = \mu_1+2(\ell-1),\,\dots \, ,\nu_1 = \mu_1 + r(\ell-1),$$ 
$$\nu_{r+i} = \mu_i \textrm{ for } i = 1, 2, \dots , s.$$
 By Lemma \ref{newcores}, $\nu$ is an $\ell$-core.

 Fix a partition $\kappa = (\kappa_1, \dots , \kappa_{r+1})$ with at most $(r+1)$ parts. Then the new partition $\lambda$ is obtained from $\nu$ by adding $\kappa_i$ horizontal $\ell$-rim hooks to row $i$ for every $i \in \{1, \dots, r+1\}$. In other words $\lambda_i = \nu_i + \ell \kappa_i$ for $i \in \{ 1, 2, \dots, r+1\}$ and $\lambda_i  = \nu_i$ for $i>r+1$. 
 
 From now on, when we associate $\lambda$ with the triple $(\mu,r, \kappa)$, we will think of $\mu \subset \lambda$ as embedded in the rows below the $r^{th}$ row in $\lambda$. We introduce the notation $\lambda \approx (\mu, r, \kappa)$ for this decomposition.

 \begin{theorem}\label{construction_theorem}
 Let $\mu$, $r$ and $\kappa$ be as above. Then $\lambda \approx (\mu,r,\kappa)$ is an $\ell$-partition. Conversely, every $\ell$-partition corresponds uniquely to a triple $(\mu,r,\kappa)$.
 \end{theorem}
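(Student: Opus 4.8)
The plan is to work throughout with the hook condition $(\star)$, invoking Theorem~\ref{maintheorem} to replace ``$\ell$-partition'' by $(\star)$.

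\emph{That the constructed $\lambda$ satisfies $(\star)$.} The key feature is that rows $1,\dots ,r+1$ of $\lambda$ form a maximal staircase of step $\ell-1$: from $\nu_i-\nu_{i+1}=\ell-1$ and $\lambda_i=\nu_i+\ell\kappa_i$ we get $\lambda_i-\lambda_{i+1}=(\ell-1)+\ell(\kappa_i-\kappa_{i+1})$, which is $\ge 1$ and $\equiv -1\pmod\ell$ for $1\le i\le r$; in particular these rows are strictly decreasing, while rows $\ge r+2$ of $\lambda$ still coincide with those of $\nu$ (i.e.\ of the embedded $\mu$). I would then fix a column $c$ and verify $(\star)$ there. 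If $c>\mu_1$, column $c$ meets only rows $1,\dots ,r+1$, and for two successive boxes of it the hook lengths differ by $\lambda_a-\lambda_{a+1}+1\equiv 0\pmod\ell$, so all hook lengths of $\lambda$ in column $c$ are congruent mod $\ell$. If $c\le\mu_1$, then $\lambda$ and $\nu$ have exactly the same rows reaching column $c$, whence $\lambda'_c=\nu'_c$ and $h^\lambda_{(a,c)}=h^\nu_{(a,c)}+\ell\kappa_a$ (with $\kappa_a=0$ for $a>r+1$); since $\nu$ is an $\ell$-core it has no hook length divisible by $\ell$, so no hook length of $\lambda$ in column $c$ is divisible by $\ell$ either. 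In both cases $(\star)$ holds for column $c$, so $\lambda$ is an $\ell$-partition.

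\emph{Recovering $(\mu,r,\kappa)$ from an $\ell$-partition $\lambda$.} Let $\nu$ be the $\ell$-core of $\lambda$. Because $\lambda$ and every partition obtained from it by removing horizontal $\ell$-rim hooks has no non-horizontal removable $\ell$-rim hook, $\lambda$ is reduced to $\nu$ using only horizontal $\ell$-rim hooks; hence $\nu\subseteq\lambda$ and $\lambda_i\equiv\nu_i\pmod\ell$ for all $i$. An $\ell$-core always has $\nu_i-\nu_{i+1}\le\ell-1$ (otherwise there is an empty abacus position directly above a bead, contradicting the core description), so the integers $\kappa_i:=(\lambda_i-\nu_i)/\ell$ automatically satisfy $\kappa_i\ge\kappa_{i+1}\ge 0$; that is, $\kappa$ is a partition. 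Take $r\ge 0$ maximal with $\nu_i-\nu_{i+1}=\ell-1$ for $i\le r$ and put $\mu=(\nu_{r+1},\nu_{r+2},\dots)$. Deleting the top $r$ rows of $\nu$ (on the abacus these are the $r$ lowest beads of a single top-justified runner) again gives an $\ell$-core, $\mu_1-\mu_2=\nu_{r+1}-\nu_{r+2}\ne\ell-1$ by maximality of $r$, and the recipe building $\nu$ from $(\mu,r)$ is exactly undone. What remains is to show $\kappa_i=0$ for all $i\ge r+2$; granting this, $(\mu,r,\kappa)$ is a legal triple, $\lambda\mapsto(\mu,r,\kappa)$ is inverse to $(\mu,r,\kappa)\mapsto\lambda$, and we obtain the asserted bijection.

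\emph{The main obstacle: $\kappa_i=0$ for $i\ge r+2$.} I would prove this by contradiction using Lemma~\ref{lemma1}. Suppose $\kappa_{r+2}\ge 1$; then $\kappa_{r+1}\ge 1$ as well. By the first half of the theorem the truncated triple $(\mu,r,(\kappa_1,\dots ,\kappa_{r+1}))$ gives an $\ell$-partition $\widetilde\lambda$ with $\widetilde\lambda_i=\lambda_i$ for $i\le r+1$ and $\widetilde\lambda_i=\nu_i$ for $i\ge r+2$. Since $\widetilde\lambda_{r+1}=\nu_{r+1}+\ell\kappa_{r+1}\ge\nu_{r+2}+\ell$, one may add a horizontal $\ell$-rim hook in row $r+2$, producing $\widetilde\lambda'$ with $\widetilde\lambda'_{r+2}=\nu_{r+2}+\ell$. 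In column $c=\nu_{r+2}+1$ of $\widetilde\lambda'$ a short computation gives $h_{(r+2,c)}=\ell$ while $h_{(r+1,c)}=(\nu_{r+1}-\nu_{r+2})+1+\ell\kappa_{r+1}$, which is not divisible by $\ell$ because $0\le\nu_{r+1}-\nu_{r+2}\le\ell-2$; thus $\widetilde\lambda'$ violates $(\star)$. But $\lambda$ is obtained from $\widetilde\lambda'$ by adjoining further horizontal $\ell$-rim hooks (filling rows $r+2,r+3,\dots$ up to the profile of $\lambda$), so Lemma~\ref{lemma1} forces $\lambda$ to violate $(\star)$ too, contradicting that $\lambda$ is an $\ell$-partition. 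I expect this step --- pinning down that $\widetilde\lambda'$ really fails $(\star)$ at the indicated column, and handling the degenerate cases $\mu=\emptyset$, $r+2>len(\nu)$, and $\ell=2$ --- to be the only genuinely delicate part; everything else is routine hook-length arithmetic together with the standard abacus description of $\ell$-cores.
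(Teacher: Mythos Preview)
Your argument is correct and is exactly the intended expansion of the paper's one-line proof ``This follows from Theorem~\ref{maintheorem}'': you verify $(\star)$ column by column for the constructed $\lambda$, then invert the construction via the $\ell$-core of $\lambda$ and use Lemma~\ref{lemma1} to rule out $\kappa_i>0$ below row $r+1$. The paper supplies no further details, so you have filled in precisely what was left implicit.
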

 \begin{proof}
 
Suppose $\lambda \approx (\mu,r, \kappa)$ were not an $\ell$-partition. Then after removal of some number of horizontal $\ell$-rim hooks we obtain a partition $\rho$ which has a removable non-horizontal $\ell$-rim hook. Note that for $1 \leq i \leq r$, $\lambda_i-\lambda_{i+1} \equiv -1 \mod \ell$, and likewise  $\rho_i-\rho_{i+1} \equiv -1 \mod \ell$. Suppose the non-horizontal $\ell$-rim hook had its rightmost topmost box in the $j^{th}$ row of $\rho$. Necessarily it is the rightmost box in that row. Clearly we must have $j\leq r$ since $\mu$ is an $\ell$-core. If $\rho_j -\rho_{j+1} > \ell-1$ then this $\ell$-rim hook must lie entirely in the $j^{th}$ row, i.e. be horizontal. If $\rho_j - \rho_{j+1} = \ell-1$ then the $\ell$-rim hook is clearly not removable.
 
 Conversely, if $\lambda$ is an $\ell$-partition, then let $\kappa_i$ denote the number of removable horizontal $\ell$-rim hooks which must be removed from row $i$ to obtain the $\ell$-core $\nu$ of $\lambda$. Let $r$ denote the index of the first row for which $\nu_r - \nu_{r+1} \neq \ell-1$. Let $\mu = (\nu_{r+1}, \dots)$. Then $\lambda \approx (\mu,r,\kappa)$. 
 \end{proof}
 
\begin{example}

 For $\ell = 3$, $\mu = (2,1,1)$ is a 3-core with $\mu_1-\mu_2 \neq 2$. We may add three rows ($r=3$) to it to obtain $\nu = (8,6,4,2,1,1)$, which is still a 3-core. Now we may add three horizontal $\ell$-rim hooks to the first row, three to the second, one to the third and one to the fourth ($\kappa = (3,3,1,1)$) to obtain the partition $\lambda = (17,15,7,5,1,1)$, which is a 3-partition.
$$\begin{array}{cc}
	\mu =\,\,\,\,\,\,\,\, 
	\tableau{
	         4&1\\
		2\\
		1}\,\,\,\,\,\,\,\,\,&
\linethickness{2.5pt}
\put (40,-37){\line(1,0){37}}
\put (40,-92){\line(0,1){56}}
\put (39, -91){\line(1,0){20}}
\put (58,-92){\line(0,1){38}}
\put (58, -55){\line(1,0){19}}
\put (76,-55){\line(0,1){19}}
	\nu = \,\,\,\,\,\,\,\,\,\tableau{13&10&8&7&5&4&2&1\\
         		10&7&5&4&2&1\\
		7&4&2&1\\
		4&1\\
		2\\
		1} \end{array}$$
$$\begin{array}{c} \mu \subset \nu \textrm{ is highlighted. } \end{array}$$
		
		$$\begin{array}{lr} \lambda = &
\linethickness{2.5pt} 
\put (306,0){\line(0,1){18}}
\put (252,0){\line(0,1){18}}
\put (198,0){\line(0,1){18}}
\put (144,0){\line(0,1){18}}
\put (270,-18){\line(0,1){18}}
\put (216,-18){\line(0,1){18}}
\put (162,-18){\line(0,1){18}}
\put (108,-18){\line(0,1){18}}
\put (126,-36){\line(0,1){18}}
\put (72,-36){\line(0,1){18}}
\put (90,-54){\line(0,1){18}}
\put (36,-54){\line(0,1){18}}
\put (36,-54){\line(1,0){54}}
\put (36,-36){\line(1,0){90}}
\put (72,-18){\line(1,0){198}}
\put (108,0){\line(1,0){198}}
\put (144,18){\line(1,0){162}}
	\tableau{22&19&18&17&16&14&13&11&10&9&8&7&6&5&4&2&1\\
			19&16&15&14&13&11&10&8&7&6&5&4&3&2&1\\
			10&7&6&5&4&2&1\\
			7&4&3&2&1\\
			2\\
			1}\end{array}
$$
$$\begin{array}{c} \textrm{The 3-partition constructed above, with cells from }\kappa \textrm{ highlighted.}\end{array}$$
\end{example}

\begin{remark}
In the proofs of Theorems \ref{top_and_bottom} and \ref{second_from_bottom} we will prove that a partition is an $\ell$-partition by giving its decomposition into $(\mu,r,\kappa)$.
\end{remark}

\subsection{Counting $\ell$-partitions}
We derive a closed formula for our generating function $B_{\ell}$ by using our $\ell$-partition decomposition described above. First we note that $$ \displaystyle x^{\ell-1} C_{\ell}(x) = \sum_{\mu \in \mathcal{C}_{\ell} \,:\, \mu_1-\mu_2 \,= \,\ell -1} x^{\mu_1}.$$ Therefore, $\displaystyle \sum_{\mu \in \mathcal{C}_{\ell} : \, \mu_1-\mu_2 \,\neq \,\ell-1} x^{\mu_1} = (1-x^{\ell-1}) C_{\ell}(x)$.  Hence the generating function for all cores $\mu$ with $\mu_1-\mu_2 \neq \ell-1$ is $\displaystyle \frac{1-x^{\ell-1}}{(1-x)^{\ell-1}}$.

We are now ready to describe the generating function for $\ell$-partitions with respect to the statistic of the first part.
Let $B_{\ell}(x) = \sum_{k=0}^{\infty} b_k^{\ell} x^k$ where $b_k^{\ell} = \# \{ \lambda \in \mathcal{L}_\ell: \lambda_1 = k \}$, i.e. $B_{\ell}(x) = \sum_{\lambda \in \mathcal{L}_{\ell}} x^{\lambda_1}$.

\begin{theorem} $$B_{\ell}(x) = \frac{1-x^{\ell-1}}{(1-x)^{\ell -1}(1-x^{\ell-1} - x^{\ell})}.$$
\end{theorem}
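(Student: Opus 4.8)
The plan is to use the decomposition $\lambda \approx (\mu, r, \kappa)$ from Theorem \ref{construction_theorem}, which sets up a bijection between $\ell$-partitions and triples $(\mu, r, \kappa)$ where $\mu$ is an $\ell$-core with $\mu_1 - \mu_2 \neq \ell - 1$, $r \geq 0$, and $\kappa$ is a partition with at most $r+1$ parts. The strategy is to track how the first part $\lambda_1$ depends on the three pieces of data: since $\lambda_1 = \nu_1 + \ell\kappa_1 = \mu_1 + r(\ell-1) + \ell\kappa_1$, the generating function $B_\ell(x) = \sum_{\lambda \in \mathcal{L}_\ell} x^{\lambda_1}$ should factor as a product of three generating series, one for each coordinate of the triple, provided the ranges of the three pieces are genuinely independent. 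I would first check that independence: $\mu$ ranges over all $\ell$-cores with $\mu_1 - \mu_2 \neq \ell-1$ freely, $r$ ranges over all nonnegative integers freely, and then $\kappa$ ranges over all partitions with at most $r+1$ parts — but the contribution of $\kappa$ to $\lambda_1$ is only through $\kappa_1$, and the number of partitions with at most $r+1$ parts and fixed largest part $\kappa_1 = m$ is $\binom{m + r}{r}$ (choosing the remaining at most $r$ parts each $\leq m$).

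Next I would assemble the three factors. For $\mu$: by the computation just before the theorem statement, $\sum_{\mu \in \mathcal{C}_\ell,\ \mu_1 - \mu_2 \neq \ell-1} x^{\mu_1} = \dfrac{1 - x^{\ell-1}}{(1-x)^{\ell-1}}$. For the $r$ and $\kappa$ part together: the combined contribution to $x^{\lambda_1}$ from the ``added rows'' and the ``rim hooks'' is
\[
\sum_{r \geq 0} \sum_{m \geq 0} \binom{m+r}{r} x^{r(\ell-1)}\, x^{\ell m}.
\]
Here I would sum over $m$ first for fixed $r$, using $\sum_{m \geq 0} \binom{m+r}{r} y^m = \dfrac{1}{(1-y)^{r+1}}$ with $y = x^\ell$, to get $\sum_{r\geq 0} \dfrac{x^{r(\ell-1)}}{(1-x^\ell)^{r+1}}$, and then sum the geometric series in $r$ with ratio $\dfrac{x^{\ell-1}}{1-x^\ell}$, obtaining
\[
\frac{1}{1-x^\ell} \cdot \frac{1}{1 - \frac{x^{\ell-1}}{1-x^\ell}} = \frac{1}{1 - x^\ell - x^{\ell-1}}.
\]
Multiplying the two factors gives $B_\ell(x) = \dfrac{1-x^{\ell-1}}{(1-x)^{\ell-1}(1 - x^{\ell-1} - x^\ell)}$, as claimed.

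The step I expect to be the main obstacle — or at least the one requiring the most care — is justifying that the first part $\lambda_1$ really decomposes additively as $\mu_1 + r(\ell-1) + \ell\kappa_1$ with the three quantities ranging independently, i.e. that no constraint couples them. One must confirm: (i) that in forming $\nu$ from $\mu$ the top row becomes $\nu_1 = \mu_1 + r(\ell-1)$ exactly (read off directly from the construction in Section \ref{construct}), (ii) that adding $\kappa_1$ horizontal $\ell$-rim hooks to row $1$ changes $\lambda_1$ by exactly $\ell\kappa_1$, and (iii) that the condition ``$\kappa$ has at most $r+1$ parts'' is the only restriction on $\kappa$, so that for fixed $r$ and fixed $\kappa_1 = m$ the count of valid $\kappa$ is precisely $\binom{m+r}{r}$, independent of $\mu$. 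Once these are in place the computation is the routine double-sum evaluation above. I would also do a sanity check at $\ell = 2$: the formula gives $B_2(x) = \dfrac{1-x}{(1-x)(1 - x - x^2)} = \dfrac{1}{1-x-x^2}$, the Fibonacci generating function, which should match a direct count of $2$-partitions by first part.
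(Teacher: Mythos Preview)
Your proposal is correct and follows essentially the same approach as the paper: both use the decomposition $\lambda \approx (\mu,r,\kappa)$ to write $\lambda_1 = \mu_1 + r(\ell-1) + \ell\kappa_1$, count the $\kappa$'s with fixed $\kappa_1$ and at most $r+1$ parts as $\binom{r+\kappa_1}{r}$, and then sum first over $\kappa_1$ and then over $r$ before multiplying by the core factor $(1-x^{\ell-1})/(1-x)^{\ell-1}$. Your write-up is in fact slightly more explicit than the paper's in evaluating the inner geometric series, and the $\ell=2$ sanity check is a nice addition.
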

\begin{proof}
We will follow our construction of $\ell$-partitions from Section \ref{construct}. Note that if $\lambda \approx (\mu, r, \kappa)$, then the first part of $\lambda$ is ${\mu_1+\ell \kappa_1 + r (\ell-1)}$. 
Hence $\lambda$ contributes $x^{\mu_1+\ell \kappa_1 + r (\ell-1)}$ to $B_{\ell}$.

Fix a core $\mu$ with $\mu_1-\mu_2 \neq \ell-1$. Let $r$ and $\kappa_1$ be fixed non-negative integers. Let $\gamma_{r, \kappa_1}$ be the number of partitions with first part $\kappa_1$ and length less than or equal to $r+1$. $\gamma_{r, \kappa_1}$ counts the number of $\ell$-partitions with $r$ and $\kappa_1$ fixed that can be constructed from $\mu$. Note that $\gamma_{r, \kappa_1}$ is independent of what $\mu$ is. 

$\gamma_{r,\kappa_1}$ is the same as the number of partitions which fit inside a box of height $r$ and width $\kappa_1$. Hence $\gamma_{r, \kappa_1} = \binom{r+\kappa_1}{r}$. Fixing $\mu$ and $r$ as above, the generating function for the number of $\ell$-partitions with core $(\mu,r,\emptyset)$ with respect to the number of boxes added to the first row is $\displaystyle \sum_{\kappa_1=0}^{\infty} \gamma_{r,\kappa_1} x^{\kappa_1 \ell} = \frac{1}{(1-x^{\ell})^{r+1}}$.

Now for a fixed $\mu$ as above, the generating function for the number of $\ell$-partitions which can be constructed from $\mu$ with respect to the number of boxes added to the first row is $\displaystyle \sum_{r=0}^{\infty} x^{r (\ell-1)} \frac{1}{(1-x^{\ell})^{r+1}}$. Multiplying through by $1-x^{\ell-1} - x^{\ell}$, one can check that

 $$\sum_{r=0}^{\infty} x^{r (\ell-1)} \frac{1}{(1-x^{\ell})^{r+1}} = \frac{1}{1-x^{\ell-1} - x^{\ell}}.$$

 Therefore $B_{\ell}(x)$ is just the product of the two generating functions $(1-x^{\ell-1})C_{\ell}(x)$ and $\frac{1}{1-x^{\ell-1} - x^{\ell}}$. Hence $$B_{\ell}(x) = \frac{1-x^{\ell-1}}{(1-x)^{\ell -1}(1-x^{\ell-1} - x^{\ell})}.$$

\end{proof}

\begin{remark}
It would be desirable to obtain a formula for $\sum_{\lambda \in \mathcal{L}_\ell} x^{|\lambda|}$, but experimental evidence for $\ell = 2$ and $3$ showed this to be quite difficult.
\end{remark}

\subsection{Counting $\ell$-partitions of a fixed weight and fixed core} Independently of the authors, Cossey, Ondrus and Vinroot have a similar construction of partitions associated with irreducible representations. In \cite{COV}, they gave a construction analogous to our construction on $\ell$-partitions from Section \ref{construct} for the case of the symmetric group over a field of characteristic $p$. After reading their work and noticing the similarity to our own, we decided to include the following theorem, which is an analogue of their theorem for symmetric groups. The statement is a direct consequence of our construction, so no proof will be included.
\begin{theorem}
For a fixed core $\nu$ satisfying $\nu_i-\nu_{i+1} = \ell-1$ for $i = 1,2, \dots r$ and $\nu_{r+1}-\nu_{r+2} \neq \ell-1$, the number of $\ell$-partitions of a fixed weight $w$ is the number of  partitions of $w$ with at most $r+1$ parts. 
The generating function for the number of $\ell$-partitions of a fixed $\ell$-core $\nu$ with respect to the statistic of the weight of the partition is thus $\displaystyle \prod_{i=1}^{r+1} \frac{1}{1-x^i}$. Hence the generating function for all $(\ell,0)$-Carter partitions with fixed core $\nu$ with respect to the statistic of the size of the partition is 
$\displaystyle \sum_{\lambda\in \mathcal{L}_{\ell} \textrm{ with core } \nu} x^{|\lambda|} = x^{|\nu|} \prod_{i=1}^{r+1} \frac{1}{1-x^{\ell i}}$.

\end{theorem}
\begin{example}
Let $\ell = 3$ and let $\nu = (6,4,2,1,1) \approx ((2,1,1),2,\emptyset)$ be a 3-core. Then the number of 3-partitions of weight $5$ with core $\nu$ is exactly the number of partitions of $5$ into at most $3$ parts. There are 5 such partitions ($(5), (4,1), (3,2), (3,1,1), (2,2,1)$). Therefore, there are $5$ such $\ell$-partitions. They are: $$(21,4,2,1,1),  (18,7,2,1,1),  (15,10,2,1,1),  (15,7,5,1,1), (12,10,5,1,1).$$ 

$$\begin{array}{cc} \nu = & \tableau{\mbox{}&\mbox{}&\mbox{}&\mbox{}&\mbox{}&\mbox{}\\
\mbox{}&\mbox{}&\mbox{}&\mbox{}\\
\mbox{}&\mbox{}\\
\mbox{}\\
\mbox{}&
}
\end{array}$$

For $\nu$ above,  $r = 2$, so horizontal 3-rim hooks can be added to the first three rows.
\end{example}

\section{The crystal of the basic representation of $\widehat{\mathfrak{sl}_{\ell}}$}\label{crystal}

There is a crystal graph structure on the set of all $\ell$-regular partitions.
The crystal can be viewed as a $\Z/ \ell \Z$-colored directed graph whose nodes
are the $\ell$-regular partitions and whose outwardly oriented $i$-edges
indicate the addition of a particular box of residue $i$.
Representation-theoretically the nodes stand for irreducible representations
and the edges 
indicate a partial branching rule (the simple quotients of induction).

{\sloppy
Irreducible $H_n(q)$-modules  with interesting
representation-theoretic behavior often have nice combinatorial
characterizations in this crystal. 
An example is given by the irreducibles that
are projective $H_n(q)$-modules. 
These are precisely parameterized by the $\ell$-cores. 
They can be characterized crystal-theoretically as the nodes unique  with 
their given weight (part of the data that goes into the
definition of crystal, which is in this context the multiset of residues of a partition) or  as the extremal nodes as follows.
$\affS{\ell}$ acts on the nodes of the crystal (indeed on all partitions)
by ``reflecting $i$-strings" where an $i$-string is a 
maximal connected component 
of the subgraph consisting of just $i$-colored arrows.
The $\ell$-cores are the nodes in the $\affS{\ell}$-orbit of the highest weight node
(which is the unique node with no in-arrows), in this setting, the
empty partition. 

}
As we have seen in the previous sections, in some ways $\ell$-partitions
generalize $\ell$-cores.
It is then natural to expect that the combinatorial characterization of
$\ell$-partitions in the crystal is similar to that of $\ell$-cores.
And it is, but with a few crucial differences.
The $\ell$-partitions do not form an $\affS{\ell}$-orbit, nor
even a union of $\affS{\ell}$-orbits. 
 It is still true that if a node is an $\ell$-partition
then the extreme nodes in its $i$-string (for any $i \in \Z/\ell \Z$)
are also  $\ell$-partitions.
However, the $\ell$-partitions do not have to live just at 
the extremes. 
The condition can be relaxed, in some cases, to be
``second from the bottom" of an $i$-string, but
 nowhere else along the $i$-string, save the extreme ends. In particular the node second from the top of an $i$-string never corresponds to an $\ell$-partition except in the trivial cases that that node is coincidentally second from the bottom or at an extreme end.
This section gives a combinatorial proof of this fact, and 
Theorem  \ref{second_from_bottom} below characterizes 
when $\ell$-partitions are sub-extremal on an $i$-string.
At the moment we only have a partial representation-theoretic
explanation for the pattern.

\subsection{Description of crystal} We will assume some familiarity with the theory of crystals (see \cite{K}), and their relationship to the representation theory of the finite Hecke algebra (see \cite{G} or \cite{Kl}). We will look at the crystal $B(\Lambda_0)$ of the irreducible highest weight module $V(\Lambda_0)$ of the affine Lie algebra $\widehat{\mathfrak{sl}_{\ell}}$ (also called the basic representation of $\widehat{\mathfrak{sl}_{\ell}}$).
The set of nodes of $B(\Lambda_0)$ is denoted $B := \{ \lambda \in \mathcal{P} : \, \lambda \textrm{ is } \ell \textrm{-regular} \} $. We will describe the arrows of $B(\Lambda_0)$ below. This description is originally due to Misra and Miwa (see \cite{MM}).

We say the box $(a,b)$ of a partition has \textit{residue} $b-a \mod \ell$. A box $x$ in $\lambda$ is said to be a removable $i$-box if it has residue $i$ and after removing $x$ from $\lambda$ the remaining diagram is still a partition. A space $y$ not in $\lambda$ is an addable $i$-box if it has residue $i$ and adding $y$ to $\lambda$ yields a partition.

\begin{example}
Let $\lambda = (8,5,4,1)$ and $\ell = 3$. Then the residues are filled into the boxes of the corresponding Young diagram as follows:

$$\begin{array}{cc}
\lambda = & \tableau{0&1&2&0&1&2&0&1 \\
2&0&1&2&0 \\
1&2&0&1\\0}
\end{array}
$$
$\lambda$ has two removable 0-boxes (the boxes (2,5) and (4,1)), two removable 1-boxes (the boxes (1,8) and (3,4)), no removable 2-boxes, no addable 0-boxes, two addable 1-boxes (at (2,6) and (4,2)), and three addable 2-boxes (at (1,9), (3,5) and (5,1)).
\end{example}

For a fixed $i$, ($0 \leq i < \ell$), we place $-$ in each removable $i$-box and $+$ in each addable $i$-box. The $i$-signature of $\lambda$ is the word of $+$ and $-$'s in the diagram for $\lambda$, read from bottom left to top right. The reduced $i$-signature is the word obtained after repeatedly removing from the $i$-signature all pairs $- +$. The reduced $i$-signature is of the form $+ \dots +++--- \dots -$. The boxes corresponding to $-$'s in the reduced $i$-signature are called \textit{normal $i$-boxes}, and the positions corresponding to $+$'s are called \textit{conormal $i$-boxes}. $\varepsilon_i(\lambda)$ is defined to be the number of normal $i$-boxes of $\lambda$, and $\varphi_i(\lambda)$ is defined to be the number of conormal $i$-boxes. If there is at least one $-$ in the reduced $i$-signature, the box corresponding to the leftmost $-$ is called the \textit{good $i$-box} of $\lambda$. If there is at least one $+$ in the reduced $i$-signature, the position corresponding to the rightmost $+$ is called the \textit{cogood $i$-box}. All of these definitions can be found in Kleshchev's book \cite{Kl}.

\begin{example}
Let $\lambda = (8,5,4,1)$ and $\ell =3$ be as above. Fix $i=1$. The diagram for $\lambda$ with removable and addable 1-boxes marked looks like:
$$\tableau{ \mbox{} & \mbox{} & \mbox{}& \mbox{} & \mbox{}& \mbox{}& \mbox{}& -\\  \mbox{}& \mbox{}& \mbox{}&\mbox{} & \mbox{} \,\,\,\,\,\,\,\,\,\,\,\,\,\,\,\,\,\,\,\,\,+\\ \mbox{}& \mbox{}& \mbox{}& -\\ \mbox{}\,\,\,\,\,\,\,\,\,\,\,\,\,\,\,\,\,\,\,\,\,+ } $$

The 1-signature of $\lambda$ is $+-+-$, so the reduced 1-signature is $+ \,\,\,\,\,\,\,\,\,\,-$ and the diagram has a good 1-box in the first row, and a cogood 1-box in the fourth row. Here $\varepsilon_1(\lambda)=1$ and $\varphi_1(\lambda)=1$. 
\end{example}

We recall the action of the crystal operators on $B.$ The crystal operator $\widetilde{e}_{i}: B \xrightarrow{i} B \cup \{0\}$ assigns to a partition  $\lambda$ the partition $\widetilde{e}_{i}(\lambda) = \lambda \setminus x$, where $x$ is the good $i$-box of $\lambda$. If no such box exists, then $\widetilde{e}_{i}(\lambda)=0$. We remark that $\varepsilon_i(\lambda) = max\{k : \widetilde{e}_{i}^k \lambda \neq 0\}$.

Similarly, $\widetilde{f}_{i}: B \xrightarrow{i} B \cup \{0\}$ is the operator which assigns to a partition  $\lambda$ the partition $\widetilde{f}_{i}(\lambda) = \lambda \cup x$, where $x$ is the cogood $i$-box of $\lambda$. If no such box exists, then $\widetilde{f}_{i}(\lambda)=0$. We remark that $\varphi_i(\lambda) = max\{k : \widetilde{f}_{i}^k \lambda \neq 0\}$.

For $i$ in $\mathbb{Z}/ \ell \mathbb{Z}$, we write $\lambda \xrightarrow{i} \mu$ to stand for $\widetilde{f}_{i} \lambda = \mu$. We say that there is an $i$-arrow from $\lambda$ to $\mu$. Note that $\lambda \xrightarrow{i} \mu$ if and only if $\widetilde{e}_{i} \mu = \lambda$. A maximal chain of consecutive $i$-arrows is called an $i$-string. We note that the empty partition $\emptyset$ is the unique highest weight node of the crystal. For a picture of the first few levels of this crystal graph, see \cite{LLT} for the cases $\ell = 2$ and $3$.

\begin{example}
Continuing with the above example, $\widetilde{e}_{1} (8,5,4,1) = $ $(7,5,4,1)$ and $\widetilde{f}_{1}(8,5,4,1) = (8,5,4,2)$. Also, $\widetilde{e}_{1} ^2(8,5,4,1) = 0$ and $\widetilde{f}_{1}^2(8,5,4,1) = 0$. The sequence $(7,5,4,1) \xrightarrow{1} (8,5,4,1) \xrightarrow{1} (8,5,4,2)$ is a $1$-string of length 3.
\end{example}

\subsection{Crystal operators and $\ell$-partitions}
We first recall some well-known facts about the behavior of $\ell$-cores in this crystal graph $B(\Lambda_0)$. There is an action of the affine Weyl group $\widetilde{S_{\ell}}$ on the crystal such that the simple reflection $s_i$ reflects each $i$-string. In other words, $s_i$ sends a node $\lambda$ to 
$$\left\{ \begin{array}{lr}
\displaystyle
			  \widetilde{f}_{i}^{\varphi_i(\lambda) - \varepsilon_i(\lambda)} \lambda \;\;\;\;&  \varphi_i(\lambda) - \varepsilon_i(\lambda) >0\\
			  \widetilde{e}_{i}^{\varepsilon_i(\lambda) - \varphi_i(\lambda)} \lambda \;\;\;\;&  \varphi_i(\lambda) - \varepsilon_i(\lambda) <0\\
			  \lambda \;\;\;\; & \varphi_i(\lambda) - \varepsilon_i(\lambda) =0.
			\end{array} \right.$$ 
			
The set of $\ell$-cores is exactly the $\widetilde{S_{\ell}}$-orbit of $\emptyset$, the highest weight node. This implies the following Proposition.
\begin{proposition}\label{adding_to_core} If $\mu$ is an $\ell$-core and $\varepsilon_i(\mu) \neq 0$ then $\varphi_i(\mu) = 0$ and $\widetilde{e}_{i}^{\varepsilon_i(\mu)}\mu$ is again an $\ell$-core. Furthermore, $\widetilde{e}_{i}^{k}\mu$ is not an $\ell$-core for any $0<k<\varepsilon_i(\mu)$. Similarly, if $\varphi_i(\mu) \neq 0$ then $\varepsilon_i(\mu) = 0$ and $\widetilde{f}_{i}^{\varphi_i(\mu)}\mu$ is an $\ell$-core but $\widetilde{f}_i^{k} \mu$ is not for $0 < k < \varphi_i(\mu)$.
\end{proposition}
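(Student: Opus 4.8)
The plan is to reduce the entire statement to a single fact, which I will call the \emph{endpoint property}: \emph{if $\mu$ is an $\ell$-core, then for every residue $i$ at least one of $\varepsilon_i(\mu)$, $\varphi_i(\mu)$ is $0$} --- equivalently, $\mu$ sits at one of the two ends of its $i$-string. Once the endpoint property is available, both halves of the Proposition follow formally from the two facts already recorded above it: that $s_i$ reflects the $i$-string, and that $\{\ell\text{-cores}\}=\widetilde{S_\ell}\cdot\emptyset$ is a single orbit, hence is carried to itself by each $s_i$.

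Here is the formal deduction, assuming the endpoint property. Suppose $\varepsilon_i(\mu)\neq 0$. Then $\varphi_i(\mu)=0$, so $\varphi_i(\mu)-\varepsilon_i(\mu)<0$ and the displayed description of $s_i$ gives $s_i\mu=\widetilde{e}_i^{\,\varepsilon_i(\mu)}\mu$; since $s_i$ preserves $\widetilde{S_\ell}\cdot\emptyset$, the node $\widetilde{e}_i^{\,\varepsilon_i(\mu)}\mu$ is again an $\ell$-core. For $0<k<\varepsilon_i(\mu)$ set $\nu=\widetilde{e}_i^{\,k}\mu$. Because $\varphi_i(\mu)=0$, the $i$-string through $\mu$ runs from its top $\widetilde{e}_i^{\,\varepsilon_i(\mu)}\mu$ down to its bottom $\mu$, and $\nu$ lies strictly between these two ends; hence $\varphi_i(\nu)=k>0$ and $\varepsilon_i(\nu)=\varepsilon_i(\mu)-k>0$, so by the endpoint property $\nu$ is \emph{not} an $\ell$-core. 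The assertions for the case $\varphi_i(\mu)\neq 0$ are obtained in exactly the same way after interchanging the roles of $\widetilde{e}_i$ and $\widetilde{f}_i$ (now $s_i\mu=\widetilde{f}_i^{\,\varphi_i(\mu)}\mu$).

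It remains to establish the endpoint property, and this is the real work. The route I would take is combinatorial, via the abacus picture of $\ell$-cores introduced above: for an $\ell$-core the beads on each runner occupy an initial segment, so for a fixed $i$ every removable $i$-box corresponds to a bead sitting immediately to the right of an empty position on the preceding runner, and every addable $i$-box to an empty position sitting immediately to the right of a bead; comparing the bead counts $n$ and $n'$ on the two consecutive runners that govern residue $i$, one finds that $\varepsilon_i(\mu)$ equals $n-n'$ when this is positive and $0$ otherwise, while $\varphi_i(\mu)$ equals $n'-n$ when positive and $0$ otherwise. In particular $\varepsilon_i(\mu)\varphi_i(\mu)=0$. (Alternatively, one may simply invoke the standard theory of extremal elements of $B(\Lambda_0)$: the nodes of $\widetilde{S_\ell}\cdot\emptyset$ are exactly the extremal elements, and these are by construction killed by $\widetilde{e}_i$ or by $\widetilde{f}_i$ for each $i$.) I expect the main obstacle to be precisely this last verification on the abacus --- correctly matching removable and addable $i$-boxes of a core to the bead/gap pattern on the relevant runners, tracking residues and the cyclic shift that occurs at runner $0$; once that bookkeeping is done, the Proposition drops out with no further argument.
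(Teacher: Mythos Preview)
Your proposal is correct and is essentially the same approach the paper takes, just with the details spelled out. The paper gives no proof at all beyond the sentence ``This implies the following Proposition,'' referring to the two facts that $s_i$ reflects $i$-strings and that $\{\ell\text{-cores}\}=\widetilde{S_\ell}\cdot\emptyset$; you have correctly isolated the one nontrivial ingredient hidden in that implication---the endpoint property $\varepsilon_i(\mu)\varphi_i(\mu)=0$ for cores---and supplied a proof of it (via abacus or extremal-vector theory), after which the formal deduction you give is exactly what the paper intends.
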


In this paper, given an $\ell$-partition $\lambda$, we will determine when $\widetilde{f}_{i}^{k}\lambda$ and $\widetilde{e}_i^k \lambda$ are also $\ell$-partitions.

The following remarks will help us in the proofs of the upcoming Theorems \ref{top_and_bottom}, \ref{other_cases} and \ref{second_from_bottom}. 
\begin{remark}\label{residue_remark} Suppose $\lambda$ is a partition. Consider its Young diagram. If any $\ell$-rim hook has an upper rightmost box of residue $i$, then the lower leftmost box has residue $i+1\mod \ell$. Conversely, a hook length $h_{(a,b)}^{\lambda}$ is divisible by $\ell$ if and only if there is an $i$ so that the rightmost box of row $a$ has residue $i$, and the lowest box of column $b$ has residue $i+1\mod \ell$.
\end{remark}
In Lemma \ref{adding_to_l_partition} we will generalize Proposition \ref{adding_to_core} to $\ell$-partitions.
\begin{proposition}\label{i_sig}
Let $\lambda$ be an $\ell$-core, and suppose $0\leq i <\ell$. Then the $i$-signature for $\lambda$ is the same as the reduced $i$-signature.
\end{proposition}

\begin{proof}
This follows from Remark \ref{residue_remark} above. 
\end{proof}
In particular, an $\ell$-core cannot have both a removable and an addable $i$-box.

\begin{lemma}\label{adding_to_l_partition} Let $\lambda$ be an $\ell$-partition, and suppose $0 \leq i < \ell$. Then the $i$-signature for $\lambda$ is the same as the reduced $i$-signature.
\end{lemma}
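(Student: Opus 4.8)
The plan is to show that the $i$-signature of an $\ell$-partition $\lambda$ has no cancelling pair $-+$; equivalently, reading from bottom-left to top-right, no removable $i$-box is immediately followed (after deleting intervening entries, but in fact there can be no intervening entries of either sign) by an addable $i$-box higher up. Since the signature is read from the bottom row to the top, a cancelling pair $-+$ corresponds to a removable $i$-box $x$ in some row $b$ and an addable $i$-box $y$ in some row $a$ with $a<b$, such that no $i$-box (addable or removable) lies strictly between them in the reading order. I would argue that the existence of such a pair forces $\lambda$ to violate $(\star)$, contradicting Theorem \ref{maintheorem}.

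First I would set up the geometry. Let $x=(b,c_b)$ be a removable $i$-box, so $c_b=\lambda_b$ and the box below-left conditions for removability hold; its residue is $c_b-b\equiv i$. Let $y=(a,c_a)$ with $a<b$ be an addable $i$-box, so $c_a=\lambda_a+1$ and residue $c_a-a\equiv i$. The key computation is to look at the hook lengths in column $c:=\lambda_b$ (the column of the removable box) of the rows $a$ and $b$, or more precisely to translate the residue data via Remark \ref{residue_remark}: a hook length $h^{\lambda}_{(p,c)}$ is divisible by $\ell$ exactly when the rightmost box of row $p$ has residue $j$ and the bottom box of column $c$ has residue $j+1$. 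I would use the fact that $x$ being removable means $h^{\lambda}_{(b,\lambda_b)}=1$ and, combined with $y$ being addable in row $a$, compare the arm of row $a$ ending just before column $c_a$ with the configuration at row $b$; the "no intervening $i$-box" hypothesis is what pins down that a relevant hook length in some fixed column is $\equiv 0$ in one of the two rows and $\not\equiv 0$ in the other, exactly the failure of $(\star)$.

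Concretely, the main obstacle — and the step I would spend the most care on — is extracting from "$x$ and $y$ form an uncancelled $-+$ pair" a single column $c$ and two rows in which the divisibility-by-$\ell$ of hook lengths disagrees. The naive choice of column may not work: the removable box sits at the end of row $b$ and the addable box is one past the end of row $a$, and $\lambda_a$ could be much larger than $\lambda_b$. I expect the right move is to consider the column $c=\lambda_b$ (so $(b,c)\in\lambda$ is the removable box itself, with hook length $1$, not divisible by $\ell$) and the box $(a,c)$, which lies in $\lambda$ since $\lambda_a\ge\lambda_b$; then show $\ell\mid h^{\lambda}_{(a,c)}$. For this one writes $h^{\lambda}_{(a,c)}=(\text{arm})+(\text{leg})+1$, where the arm is $\lambda_a-c$ and uses the residue of the rightmost box of row $a$ (which, since $y=(a,\lambda_a+1)$ has residue $i$, means the rightmost box of row $a$ has residue $i-1$), together with the residue of the bottom box of column $c$; the hypothesis that nothing cancels between them, and that $x$ is genuinely removable (so column $\lambda_b$ bottoms out at row $b$ with residue $i$), should force the bottom-of-column residue to be $i\equiv(i-1)+1$, giving $\ell\mid h^{\lambda}_{(a,c)}$ by Remark \ref{residue_remark}. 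That contradicts $(\star)$ via the pair $(a,c),(b,c)$, and hence by Theorem \ref{maintheorem} contradicts $\lambda$ being an $\ell$-partition. The cases where the removable $i$-box is lower than \emph{every} addable $i$-box, or vice versa, produce no $-+$ pair and need no argument. I would also double-check the boundary subtlety that an uncancelled pair might have the $+$ and $-$ in the same row or in adjacent rows; these should be handled by the same hook-length bookkeeping, and I would note that Proposition \ref{adding_to_core} is exactly this statement for $\ell$-cores (the case $r=0$, $\kappa=\emptyset$ of the decomposition), so the lemma is its expected generalization.
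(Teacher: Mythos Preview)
Your approach is correct and genuinely different from the paper's. The paper argues via the structural decomposition $\lambda\approx(\mu,r,\kappa)$: letting $j$ be the residue at the end of the first row, it treats the cases $i\notin\{j,j+1\}$, $i=j+1$, $i=j$ separately, each time pushing any hypothetical $-+$ pair down into the core $\mu$ and invoking Proposition~\ref{adding_to_core}. Your route bypasses the decomposition entirely: given a removable $i$-box $(b,\lambda_b)$ and an addable $i$-box $(a,\lambda_a+1)$ with $a<b$, you look at column $c=\lambda_b$ and observe that $h^{\lambda}_{(b,c)}=1$ while Remark~\ref{residue_remark} forces $\ell\mid h^{\lambda}_{(a,c)}$ (rightmost box of row $a$ has residue $i-1$, bottom of column $c$ is $(b,\lambda_b)$ itself with residue $i$). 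This violates $(\star)$ directly. Your argument is shorter and avoids case analysis; the paper's approach has the advantage of rehearsing the $(\mu,r,\kappa)$ machinery that drives the later theorems.

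One simplification you should make: the ``no intervening $i$-box'' hypothesis is irrelevant to your computation. Removability of $(b,\lambda_b)$ alone guarantees $\lambda_{b+1}<\lambda_b$, so column $\lambda_b$ bottoms out at row $b$; nothing about what lies between rows $a$ and $b$ is used. Thus you actually prove the stronger statement that no removable $i$-box can sit in any row below any addable $i$-box, which immediately gives that the signature has the form $+\cdots+-\cdots-$. You can drop the hedging about adjacency and the ``should force'' language; the argument is clean as stated.
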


\begin{proof}
We need to show that there does not exist positions $(a,b)$ and $(c,d)$ such that $(a,b)$ is an addable $i$-box, $(c,d)$ is a removable $i$-box, and $c>a$. But if this were the case, then the hook length $h_{(a,d)}^\lambda$ would be divisible by $\ell$ (by Remark \ref{residue_remark}), but $\ell$ does not divide $h_{(c,d)}^\lambda =1$. Then $\lambda$ would violate $(\star)$, so it would not be an $\ell$-partition.
\end{proof}

\begin{remark}\label{adding_to_l_partition_remark}
As a consequence of Lemma \ref{adding_to_l_partition}, the action of the operators $\widetilde{e}_i$ and $\widetilde{f}_i$ is simplified in the case of $\ell$-partitions. For fixed $i$, applying successive $\widetilde{f}_{i}$'s to $\lambda$ corresponds to adding all addable boxes of residue $i$ from right to left (i.e. all addable $i$-boxes are conormal). Similarly, applying successive $\widetilde{e}_{i}$'s to $\lambda$ corresponds to removing all removable boxes of residue $i$ from left to right (i.e. all removable $i$-boxes are normal).
\end{remark}
In the following Theorems \ref{top_and_bottom}, \ref{other_cases} and \ref{second_from_bottom}, we implicitly use Remark \ref{residue_remark} to determine when a hook length is divisible by $\ell$, and  Remark \ref{adding_to_l_partition_remark} when applying $\widetilde{e}_{i}$ and $\widetilde{f}_{i}$ to $\lambda$. Unless it is unclear from the context, for the rest of the paper $\varphi = \varphi_i(\lambda)$ and $\varepsilon = \varepsilon_i(\lambda)$. 

\begin{remark} Suppose $\lambda \approx (\mu,r,\kappa)$. When viewing $\mu$ embedded in $\lambda$, we note that if a box $(a,b) \in \mu \subset \lambda$ has residue $i \mod \ell$ in $\lambda$, then it has residue $i-r \mod \ell$ in $\mu$. 
\end{remark}
Let $\lambda = (\lambda_1, \lambda_2, \dots )$ be a partition, and $r$ be any integer. We define $\overline{\lambda} = (\lambda_2, \lambda_3, \dots  )$, $\hat{\lambda} = (\lambda_1, \lambda_1, \lambda_2, \lambda_3, \dots )$ and $\lambda+1^r = (\lambda_1+1, \lambda_2+1, \dots, \lambda_r+1, \lambda_{r+1}, \dots )$, extending $\lambda$ by $r-len(\lambda)$ parts of size $0$ if $r > len(\lambda)$. We note that Lemma \ref{newcores} implies that $\overline{\lambda}$ is an $\ell$-core.

\subsection{$\ell$-partitions in the crystal $B(\Lambda_0)$}

\begin{theorem}\label{top_and_bottom}
Suppose that $\lambda$ is an $\ell$-partition and $0\leq i < \ell$. Then
\begin{enumerate}
\item\label{f} $\widetilde{f}_{i}^{\varphi} \lambda$ is an $\ell$-partition,
\item\label{e} $\widetilde{e}_{i}^{\varepsilon} \lambda$ is an $\ell$-partition.
\end{enumerate}
\end{theorem}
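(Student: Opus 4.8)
The plan is to use the $(\mu,r,\kappa)$ decomposition of Theorem~\ref{construction_theorem} together with Remark~\ref{adding_to_l_partition_remark}, which tells us that since $\lambda$ is an $\ell$-partition, applying $\widetilde{f}_i^{\varphi}$ simply adds \emph{all} addable $i$-boxes (right to left) and applying $\widetilde{e}_i^{\varepsilon}$ removes \emph{all} removable $i$-boxes (left to right). So the real content is: after adding (resp.\ removing) all boxes of residue $i$ to (resp.\ from) $\lambda$, the result still satisfies $(\star)$, and then invoke Theorem~\ref{maintheorem}. I will treat $\widetilde{f}_i^\varphi$ in detail; the $\widetilde{e}_i^\varepsilon$ case is symmetric (or follows since $\widetilde{e}_i^\varepsilon\lambda = \nu$ forces $\lambda = \widetilde{f}_i^\varphi\nu$, reducing it to part~(1) applied to $\nu$, once we know $\nu$ is a partition — but it is cleaner to just run the same argument).

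First I would set $j$ equal to the residue of the box $(1,\lambda_1)$, which by the decomposition is also the residue of $(k,\lambda_k)$ for all $1 \le k \le r+1$ (the rows ending the ``staircase plus $\kappa$'' part). The argument splits into the same three cases as the proof of Lemma~\ref{adding_to_l_partition}, according to whether $i \ne j,j+1$, or $i = j+1$, or $i = j$. In the case $i \ne j, j+1$, all addable $i$-boxes lie strictly below row $r+1$, inside (the region governed by) the core $\mu$; adding them amounts to passing from $\mu$ to another $\ell$-core $\mu'$ (using that $\widetilde f_i^{\varphi}$ applied to a core gives a core, Proposition~\ref{adding_to_core}), and one checks the new partition is still $\lambda' \approx (\mu', r, \kappa)$ with $\mu'$ a core satisfying $\mu'_1 - \mu'_2 \ne \ell-1$ (this needs a small check: adding boxes of residue $i \ne j$ below row $r+1$ cannot change $\mu_1$, so the gap condition persists; and the residue of $(1,\lambda'_1)$ is unchanged). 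Then Theorem~\ref{construction_theorem} gives that $\lambda'$ is an $\ell$-partition. In the case $i = j+1$, the addable $i$-boxes include the box at the end of each of rows $1,\dots,r+1$ (extending the staircase), and possibly addable $i$-boxes inside $\mu$; here adding the $r+1$ boxes at the ends of the first $r+1$ rows replaces $\kappa$ by $\kappa+1$ (componentwise on the first $r+1$ parts) — wait, more precisely it changes $\nu$ by lengthening the staircase, which I would need to re-express as a genuine $(\mu', r', \kappa')$ triple; the cleanest bookkeeping is to note that adding one box to the end of each of rows $1,\dots,r+1$ turns $\nu = (\dots)$ with top gaps $\ell-1$ into a core with a longer such run, so $r$ increases appropriately and $\kappa$ shifts. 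In the case $i = j$, there are no addable $i$-boxes in the first $r+1$ rows, so all addable $i$-boxes are inside $\mu$, and we are essentially back in the core-only situation as in the first case.

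The main obstacle I anticipate is the bookkeeping in the $i = j+1$ and $i=j$ cases: verifying that after adding all the residue-$i$ boxes the resulting partition really does decompose as a valid triple $(\mu', r', \kappa')$ with $\mu'$ an $\ell$-core and $\mu'_1 - \mu'_2 \ne \ell-1$, rather than just checking $(\star)$ directly. An alternative, possibly cleaner route that avoids re-deriving the decomposition is: show directly that $\widetilde f_i^\varphi \lambda$ satisfies $(\star)$ by a hook-length argument in the spirit of Lemma~\ref{lemma1} — since adding an entire residue-$i$ "ribbon of boxes" (one per eligible row, right to left) changes each column's multiset of hook lengths mod $\ell$ in a controlled way, using Remark~\ref{residue_remark} to track divisibility. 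I would first attempt the decomposition route since the paper's Remark after Theorem~\ref{construction_theorem} explicitly signals that the proofs of these crystal theorems proceed "by giving its decomposition into $(\mu,r,\kappa)$", and fall back to the direct $(\star)$ computation only if the triple bookkeeping becomes unwieldy.
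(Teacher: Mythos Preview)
Your plan is exactly the paper's: decompose $\lambda \approx (\mu,r,\kappa)$, split according to where the addable $i$-boxes lie, and exhibit $\widetilde f_i^{\varphi}\lambda$ as a new valid triple. Your cases $i\ne j,j{+}1$ and $i=j$ are both the paper's single case ``the first row of $\mu$ (embedded as row $r{+}1$ of $\lambda$) has no addable $i$-box,'' in which one simply gets $\widetilde f_i^{\varphi}\lambda \approx (\widetilde f_{i-r}^{\,\varphi}\mu,\, r,\, \kappa)$ with the same $r$ and $\kappa$.

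The only real gap is in your case $i=j{+}1$, where your guesses about the new triple are off. Adding one box to each of rows $1,\dots,r{+}1$ does not send $\kappa$ to $\kappa+1$ (that would mean adding $\ell$ boxes per row, not one), and $r$ does not generically increase. What actually happens: rows $1,\dots,r$ each gain one box, preserving all the $(\ell{-}1)$-gaps among the $\nu_k$, and the remaining $\varphi-r$ addable $i$-boxes go into $\mu$; so $r$ and $\kappa$ stay fixed and the new core is $\widetilde f_{i-r}^{\,\varphi-r}\mu$ --- \emph{except} when $\mu_1-\mu_2=\ell-2$. In that sub-case (and only then) the new first part of $\mu$ sits exactly $\ell-1$ above the second and must be absorbed into the staircase, giving $\widetilde f_i^{\varphi}\lambda \approx \bigl(\overline{\widetilde f_{i-r}^{\,\varphi-r}\mu},\, r{+}1,\, \kappa\bigr)$. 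This dichotomy on whether $\mu_1-\mu_2$ equals $\ell-2$ is precisely the bookkeeping obstacle you anticipated; once you make that split the proof is complete and no fallback to a direct $(\star)$ verification is needed.
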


\begin{proof} We will prove only \eqref{f}, as \eqref{e} is similar. Recall all addable $i$-boxes of $\lambda$ are conormal by Lemma \ref{adding_to_l_partition}. The proof of \eqref{f} relies on the decomposition of the $\ell$-partition as in Section \ref{construct}.  Let $\lambda \approx (\mu, r, \kappa)$. We break the proof of \eqref{f} into three cases:
\begin{enumerate}
    \renewcommand{\labelenumi}{(\alph{enumi})}

\item If the first row of $\mu$ embedded in $\lambda$ does not have an addable $i$-box then we cannot add an $i$-box to the first $r+1$ rows of $\lambda$. Hence $\varphi = \varphi _{i-r}(\mu)$.  $\widetilde{f}_{{i-r}}^{\varphi} \mu$, 
is still a core by Proposition \ref{adding_to_core}. Hence we can exhibit the decomposition $\widetilde{f}_{i}^{\varphi} \lambda \approx ( \widetilde{f}_{{i-r}}^{\varphi} \mu, r, \kappa)$.

 \item If the first row of $\mu$ embedded in $\lambda$ does have an addable $i$-box and $\mu_1-\mu_2 < \ell-2$, then the first $r+1$ rows of $\lambda$ have addable $i$-boxes. Also some rows of $\mu$ will have addable $i$-boxes. $\widetilde{f}_i^{\varphi}$ adds an $i$-box to the first $r$ rows of $\lambda$, plus adds any addable $i$-boxes to the core $\mu$. Note that $\varphi_{i-r}(\mu) = \varphi -r$.  Since $\mu_1 - \mu_2 < \ell-2$, the first and second rows of $\widetilde{f}_{i-r}^{\varphi-r} \mu$ differ by at most $\ell-2$. Therefore $\widetilde{f}_{i}^{\varphi} \lambda \approx (\widetilde{f}_{{i-r}}^{\varphi-r} \mu, r, \kappa) $.

\item If the first row of $\mu$ embedded in $\lambda$ does have an addable $i$-box and $\mu_1-\mu_2 = \ell-2$, then $\widetilde{f}_i^{\varphi}$ will add the addable $i$-box in the $r+1^{st}$ row (i.e. the first row of $\mu$). Since the $(r+2)^{nd}$ row does not have an addable $i$-box, we know that the $(r+1)^{st}$ and $(r+2)^{nd}$ rows of $\widetilde{f}_i^{\varphi}(\lambda)$ differ by $\ell-1$. Therefore $\widetilde{f}_{i}^{\varphi} \lambda \approx (\overline{\widetilde{f}_{{i-r}}^{\varphi-r} \mu}, r+1, \kappa)$ is an $\ell$-partition.
 \end{enumerate}

\end{proof}

\begin{lemma}
Let $\lambda$ be an $\ell$-partition. Then $\lambda$ cannot have one normal box and two conormal boxes of the same residue.
\end{lemma}

\begin{proof}\label{lemmaforproofofothercases}
Label any two of the conormal boxes $n_1$ and $n_2$, with $n_1$ to the left of $n_2$. Pick any normal box and label it $n_3$.  By Lemma \ref{i_sig}, $n_3$ must lie to the right of $n_1$. Then the hook length in the column of $n_1$ and row of $n_3$ is a multiple of $\ell$, but the hook length in the column of $n_1$ and row of $n_2$ is not a multiple of $\ell$ by Remark \ref{residue_remark}.
\end{proof}

\begin{theorem}\label{other_cases}
Suppose that $\lambda$ is an $\ell$-partition. Then
\begin{enumerate}
\item\label{f_theorem} $\widetilde{f}_{i}^{k} \lambda$ is not an $\ell$-partition for $0<k < \varphi -1,$
\item\label{e_theorem} $\widetilde{e}_{i}^k \lambda$ is not an $\ell$-partition for $1<k< \varepsilon$.
\end{enumerate}
\end{theorem}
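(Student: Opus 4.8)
The plan is to prove both statements by contradiction: assuming the partition is an $\ell$-partition, I would exhibit a single column in which exactly one of two hook lengths is divisible by $\ell$, contradicting $(\star)$ and hence, by Theorem \ref{maintheorem}, the hypothesis. Two observations handle the bookkeeping. First, by Remark \ref{adding_to_l_partition_remark} (which rests on Lemma \ref{adding_to_l_partition}, so the $i$-signature is already reduced): if $a_1<\dots<a_\varphi$ are the rows of $\lambda$ carrying an addable $i$-box, then $\widetilde{f}_i^{k}\lambda$ is just $\lambda$ with one box appended to each of rows $a_1,\dots,a_k$, and dually $\widetilde{e}_i^{k}\lambda$ deletes a box from the bottom-most $k$ rows carrying a removable $i$-box. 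Second, by Remark \ref{residue_remark}: for a partition $\sigma$, $\ell\mid h^{\sigma}_{(p,c)}$ exactly when the lowest box of column $c$ in $\sigma$ has residue one larger $\pmod\ell$ than the last box of row $p$. Throughout I would use the decomposition $\lambda\approx(\mu,r,\kappa)$ of Theorem \ref{construction_theorem}, letting $j$ denote the common residue of the ends of the first $r+1$ rows, and split into the cases $i\notin\{j,j+1\}$, $i=j+1$, $i=j$.

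For \eqref{f_theorem}, put $\sigma=\widetilde{f}_i^{k}\lambda$ with $1\le k\le\varphi-2$, and single out $a:=a_k$, whose last box now has residue $i$, and $b:=a_{k+1}$, whose last box still has residue $i-1$. Since $k+1\le\varphi-1$, $b\ne a_\varphi$, and as the only one of the rows $a_1,\dots,a_\varphi$ that can be a brand-new bottom row is $a_\varphi$, row $b$ is a nonempty row of $\sigma$; hence $a<b$ and $\sigma_a>\sigma_b\ge 1$, so every column $c$ with $1\le c\le\sigma_b$ meets both rows. For such a $c$, Remark \ref{residue_remark} gives $\ell\mid h^{\sigma}_{(a,c)}$ iff the lowest box of column $c$ has residue $i+1$, while $\ell\mid h^{\sigma}_{(b,c)}$ iff it has residue $i$; as these cannot both occur, it suffices to exhibit one column $c\le\sigma_b$ whose lowest box has residue $i$ or $i+1$. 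When $i\notin\{j,j+1\}$ the entire addable/removable $i$-structure lives strictly below row $r+1$ inside the embedded core $\overline{\mu}$ (as in the proof of Lemma \ref{adding_to_l_partition}), and since those rows of $\sigma$ constitute a standalone partition with the same hook lengths, the claim reduces to the case of $\ell$-cores; for cores, and for the cases $i=j$ or $i=j+1$, I would argue directly from Proposition \ref{adding_to_core} (moving part-way along an $i$-string of cores produces non-cores) together with the explicit shape of rows $1,\dots,r+1$ --- consecutive lengths congruent to $\ell-1\pmod\ell$. The step I expect to be the main obstacle is precisely this last one --- proving that the residue clash between rows $a$ and $b$ is always witnessed by some column $c\le\sigma_b$, equivalently that the lower boundary of $\sigma$ to the left of column $\sigma_b$ cannot dodge both residues $i$ and $i+1$ --- and it is here that $k<\varphi-1$ is used: when $k=\varphi-1$ the only untouched addable $i$-box may create a fresh bottom row, so no witness row $b$ sits inside $\sigma$ and $\sigma$ can in fact be an $\ell$-partition --- exactly the situation analysed in Theorem \ref{second_from_bottom}.

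Statement \eqref{e_theorem} is handled the same way with $\widetilde{e}_i$ in place of $\widetilde{f}_i$: if $\sigma=\widetilde{e}_i^{k}\lambda$ with $2\le k\le\varepsilon-1$, the $k$ deleted rows now end in residue $i-1$ and, because $k\ge 2$, at least one of them is not the last one-box row of $\lambda$ and so survives as a nonempty row $b$ of $\sigma$; the rows lying above it that still carry a removable $i$-box end in residue $i$, and one compares such a witness row to $b$ across a column $c\le\sigma_b$ exactly as before, with the same reduction to $\ell$-cores. The narrower range $1<k<\varepsilon$ reflects the two requirements --- a surviving deleted row ($k\ge 2$) and a surviving witness ($k\le\varepsilon-1$); in particular $k=1$ must be excluded since $\widetilde{e}_i\lambda$ can itself be an $\ell$-partition (e.g.\ when it is the top of the $i$-string, or the second-from-bottom node characterized in Theorem \ref{second_from_bottom}), so there is nothing to contradict there.
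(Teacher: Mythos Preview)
Your overall plan---exhibit a violation of $(\star)$ via Remark \ref{residue_remark} and invoke Theorem \ref{maintheorem}---is exactly the paper's strategy. But you get stuck precisely where you say you do, and the case split on $i\in\{j,j+1\}$ versus $i\notin\{j,j+1\}$ that you propose as a way out is both unnecessary and, as you acknowledge, incomplete.

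The missing idea is this: instead of comparing only the two rows $a=a_k$ (normal) and $b=a_{k+1}$ (conormal) and then hunting among the columns $c\le\sigma_b$ for one whose foot has residue $i$ or $i+1$, bring in a \emph{second} conormal box. Since $k\le\varphi-2$, there are at least two conormal $i$-boxes in $\sigma=\widetilde f_i^{\,k}\lambda$; let $n_1$ be the leftmost one (hence in the lowest such row), let $n_2$ be any other conormal $i$-box, and let $n_3$ be any normal $i$-box. Take $c_1$ to be the column of $n_1$. Because $n_1$ is addable, the lowest box of column $c_1$ in $\sigma$ sits directly above $n_1$ and therefore has residue $i+1$. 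Now compare the rows of $n_2$ and $n_3$ in that fixed column: the row of $n_3$ ends in a removable $i$-box, so $\ell\mid h^{\sigma}_{(\text{row of }n_3,\,c_1)}$; the row of $n_2$ ends just to the left of an addable $i$-box, hence in residue $i-1$, so $\ell\nmid h^{\sigma}_{(\text{row of }n_2,\,c_1)}$. Both of these boxes lie in $\sigma$ because $n_2$ and $n_3$ are strictly to the right of $n_1$. That is the entire argument---no direct use of the decomposition $(\mu,r,\kappa)$, no residue-case analysis, no reduction to Proposition \ref{adding_to_core}.

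In short: your two-row setup forces you to search for a column whose foot has residue $i$ or $i+1$, and you do not know where to find one. The paper \emph{manufactures} such a column by taking the column of the lowest remaining conormal box, whose addability guarantees the foot of that column has residue $i+1$. The same device handles \eqref{e_theorem} with the roles of normal and conormal interchanged.
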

\begin{proof}

If $0< k < \varphi-1$ then there are at least two conormal $i$-boxes in $\widetilde{f}_{i}^{k} \lambda$ and at least one normal $i$-box.  By Lemma \ref{lemmaforproofofothercases}, $\widetilde{f}_{i}^{k} \lambda$ is not an $\ell$-partition. The proof of \eqref{e_theorem} is similar to that of \eqref{f_theorem}.
\end{proof}

The above theorems told us the position of an $\ell$-partition relative to the $i$-string which it sits on in the crystal $B(\Lambda_0)$. If an $\ell$-partition occurs on an $i$-string, then both ends of the $i$-string are also $\ell$-partitions. Furthermore, the only places $\ell$-partitions can occur are at the ends of $i$-strings or possibly one position before the final node. The next theorem describes when this latter case occurs.

\begin{theorem}\label{second_from_bottom}
Suppose that $\lambda \approx (\mu,r,\kappa)$ is an $\ell$-partition. Then

\begin{enumerate}
\item\label{first}
If $\varphi > 1$ then $\widetilde{f}_{i}^{\varphi-1} \lambda$ is an $\ell$-partition if and only if $$(\dagger) \,\,\,\,\,\,\,\,\,\,\,\kappa_{r+1} = 0, \textrm{ the first row of } \lambda \textrm{ has a conormal } i\textrm{-box, and } \varphi =r+1.$$

\item\label{second} If $\varepsilon > 1$ then $\widetilde{e}_{i} \lambda$ is an $\ell$-partition if and only if $$ (\ddagger) \textrm{ the first row of } \lambda \textrm{ has a conormal } \\(i+1) \textrm{-box and either }$$
 $$\varepsilon = r  \textrm{ and } \kappa_r = 0, \textrm{ or } \varepsilon = r+1 \textrm{ and } \kappa_{r+1} = 0 .$$
\end{enumerate}
\end{theorem}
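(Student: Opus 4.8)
\textbf{Proof proposal for Theorem~\ref{second_from_bottom}.}
The plan is to prove both parts using the decomposition $\lambda \approx (\mu,r,\kappa)$ together with Theorem~\ref{maintheorem}, Lemma~\ref{adding_to_l_partition} and Remark~\ref{residue_remark}, mimicking the case analysis already used in Theorem~\ref{top_and_bottom}. Recall that by Lemma~\ref{adding_to_l_partition} the $i$-signature equals the reduced $i$-signature, so $\widetilde{f}_i^{\varphi-1}\lambda$ is obtained by adding all conormal $i$-boxes except the leftmost one, from right to left. I would first establish the ``if'' direction of \eqref{first}: assuming $(\dagger)$, namely $\kappa_{r+1}=0$, the first row of $\lambda$ has a conormal $i$-box, and $\varphi = r+1$, I claim the single un-added box is precisely the conormal $i$-box in the first row, so that $\widetilde{f}_i^{\varphi-1}\lambda$ adds one $i$-box to each of rows $2,\dots,r+1$ together with any conormal $i$-boxes sitting inside $\mu$. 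Here the key geometric point is that since $\varphi = r+1$ there are no conormal $i$-boxes strictly inside $\mu$ (all $r+1$ conormal boxes are accounted for by the staircase rows), so $\widetilde{f}_i^{\varphi-1}\lambda$ has first two rows differing by $\ell-1$ while rows $2$ through $r+1$ still form a staircase with common difference $\ell-1$; identifying $\mu$ unchanged (because $\kappa_{r+1}=0$ guarantees row $r+1$ of $\lambda$ is exactly row $1$ of $\mu$ and no box was added there or below), one gets the decomposition $\widetilde{f}_i^{\varphi-1}\lambda \approx (\mu, r-1, \kappa')$ for the appropriate $\kappa'$, hence an $\ell$-partition by Theorem~\ref{construction_theorem}.

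For the ``only if'' direction of \eqref{first}, I would assume $(\dagger)$ fails and exhibit a violation of $(\star)$ in $\widetilde{f}_i^{\varphi-1}\lambda$ using Remark~\ref{residue_remark}. There are three failure modes to treat: (i) the first row of $\lambda$ has \emph{no} conormal $i$-box; (ii) it does but $\varphi \neq r+1$ (necessarily $\varphi > r+1$, so some conormal $i$-box lies strictly inside $\mu$); (iii) $\varphi = r+1$ and the first row has a conormal $i$-box but $\kappa_{r+1}>0$. In each case, after applying $\widetilde{f}_i^{\varphi-1}$ there will be an un-added (normal-direction) $i$-box, call it $n_3$, together with at least one added box $n_1$ strictly to its left, and the residue bookkeeping of Remark~\ref{residue_remark} forces the hook length in the column of $n_1$ and the row of $n_3$ to be divisible by $\ell$ while a nearby hook length is not --- this is the same mechanism as in the proof of Theorem~\ref{other_cases}. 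In case (iii) the extra horizontal rim hooks in row $r+1$ are what destroy the staircase-plus-core structure, which again surfaces as a column whose hook lengths are not uniformly divisible by $\ell$.

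Part \eqref{second} is handled dually: $\widetilde{e}_i\lambda$ removes the leftmost (good) removable $i$-box, and Lemma~\ref{adding_to_l_partition} tells us that if $\varepsilon > 1$ this box sits among the removable $i$-boxes ending the staircase rows or inside $\mu$. The analysis of which row the good $i$-box lies in splits according to whether the first row of $\lambda$ ends with residue $i$ or $i+1$, i.e.\ whether a conormal $(i+1)$-box is present; this is exactly the dichotomy recorded in $(\ddagger)$, and the two sub-cases $\varepsilon=r$ vs.\ $\varepsilon=r+1$ correspond to whether removing the good $i$-box shortens a staircase row by one (merging it into the core) or removes a box from the core itself, with the conditions $\kappa_r = 0$ resp.\ $\kappa_{r+1}=0$ ensuring the affected row carries no extra horizontal rim hooks so that the result is again of the form $(\mu',r',\kappa')$. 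I expect the main obstacle to be the careful verification, in both directions, that after applying $\widetilde{e}_i$ or $\widetilde{f}_i^{\varphi-1}$ the resulting Young diagram genuinely decomposes as a triple $(\mu',r',\kappa')$ with $\mu'$ an honest $\ell$-core satisfying $\mu'_1 - \mu'_2 \neq \ell-1$ --- this requires tracking how the first part and the staircase interact with the boxes added or removed, and separating the ``pure staircase'' contributions from the ``inside $\mu$'' contributions; once that bookkeeping is pinned down, both the positive constructions and the $(\star)$-violations follow from Theorem~\ref{maintheorem} and Remark~\ref{residue_remark} as above.
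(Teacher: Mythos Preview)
Your overall strategy---use the decomposition $(\mu,r,\kappa)$, Lemma~\ref{adding_to_l_partition}, and Remark~\ref{residue_remark} to either exhibit a valid triple or a violation of $(\star)$---is exactly what the paper does. But there is a genuine error in your orientation of the crystal operator that propagates through the whole sketch.

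Recall that $\widetilde f_i$ adds the \emph{cogood} box, which is the rightmost $+$ in the reduced $i$-signature; since the signature is read from bottom-left to top-right, this is the conormal box in the \emph{highest} row. Hence $\widetilde f_i^{\varphi-1}$ fills the conormal boxes in rows $1,2,\dots,r$ (top to bottom) and leaves the conormal box in row $r+1$ untouched---exactly the opposite of what you write. Your sentence ``adds one $i$-box to each of rows $2,\dots,r+1$\dots and no box was added there [row $r+1$] or below'' is internally inconsistent for this reason. With the correct orientation, the ``if'' direction yields $\widetilde f_i^{\varphi-1}\lambda \approx (\widehat{\mu},\,r-1,\,\kappa+1)$ (where $\widehat\mu=(\mu_1,\mu_1,\mu_2,\dots)$), not $(\mu,r-1,\kappa')$ as you claim: row $r$ now carries $\mu_1+\ell(\kappa_r+1)$ boxes and becomes the new $(r'+1)$st row, while the untouched row $r+1$ becomes the second row of the new base core $\widehat\mu$. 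The same reversal invalidates your placement of the boxes $n_1,n_3$ in the ``only if'' direction: the unique remaining conormal box sits \emph{below and to the left} of all the newly added (now normal) boxes, not above and to the right. The paper's arguments in its cases (a) and (b) use precisely this geometry. For your case~(iii) ($\kappa_{r+1}>0$), the paper does not use the residue mechanism at all; instead it removes $\kappa_r-\kappa_{r+1}+1$ horizontal $\ell$-rim hooks from row $r$ of $\widetilde f_i^{\varphi-1}\lambda$ to force rows $r$ and $r+1$ to have equal length, at which point a non-horizontal $\ell$-rim hook is visible.

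For part~\eqref{second} your plan to argue dually would likely work, but the paper takes a shorter route: it first observes (via Theorem~\ref{other_cases}) that one may assume $\varphi_i(\lambda)=0$, and then uses the identity $\widetilde e_i\lambda=\widetilde f_i^{\,\varepsilon-1}\bigl(\widetilde e_i^{\,\varepsilon}\lambda\bigr)$ to reduce \eqref{second} to \eqref{first} applied to the $\ell$-partition $\widetilde e_i^{\,\varepsilon}\lambda$. All that remains is the purely combinatorial check that $\lambda$ satisfies $(\ddagger)$ if and only if $\widetilde e_i^{\,\varepsilon}\lambda$ satisfies $(\dagger)$, which avoids repeating the three-case analysis.
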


\begin{proof}
We first prove \eqref{first} and then derive \eqref{second} from \eqref{first}. 

If $\lambda$ satisfies condition $(\dagger)$ then $\lambda$ differs from $\widetilde{f}_{i}^{\varphi-1} \lambda$ by one box in each of the first $r$ rows. Hence $(\widetilde{f}_{i}^{\varphi-1} \lambda)_r - (\widetilde{f}_{i}^{\varphi-1} \lambda)_{r+1}$ is a multiple of $\ell$, so that the first $r$ rows each have one more horizontal $\ell$-rim hook than they had in $\lambda$. After removing these horizontal $\ell$-rim hooks, we get the partition $(\widehat{\mu}, r-1, \emptyset)$. This decomposition is valid, as we will now show $\widehat{\mu}$ is an $\ell$-core. 
Since $\varphi_i(\mu)=1$, $\widetilde {f_i} \mu$ 
is also an $\ell$-core and so in particular $\ell \nmid
h_{(1,b)}^{\widetilde {f_i} \mu}$
for $1 \le b \le \mu_1+1$.
Note that $h_{(1,b)}^{\widehat  \mu} =
h_{(1,b)}^{\widetilde {f_i} \mu}=
h_{(1,b)}^{\mu} +1 $
for  $1 \le b \le \mu_1$, and for $a > 1$,
$h_{(a,b)}^{\widehat  \mu} =
h_{(a,b)}^{\mu}$, yielding $\ell \nmid h_{(a,b)}^{\widehat  \mu}$ for
all
boxes $(a,b) \in \widehat  \mu$.  By Remark \ref{divisibility},
$\widehat  \mu$ is an $\ell$-core. It is then easy to see that  $\widetilde{f}_{i}^{\varphi-1} \lambda \approx (\widehat{\mu}, r-1, \kappa+1^r)$, so therefore $\widetilde{f}_{i}^{\varphi-1} \lambda$ is an $\ell$-partition.

Conversely:
\begin{enumerate} 
    \renewcommand{\labelenumi}{(\alph{enumi})}
    
\item If the first part of $\lambda$ has a conormal $j$-box, with $j \neq i$, call this box $n_1$. If $j = i+1$ then the box $(r+1, \lambda_{r+1})$ has residue $i$. If an addable $i$-box exists, say at $(a,b)$, it must be below the first $r+1$ rows. But then the hook length $h_{(r+1,b)}^{\mu}$ is divisible by $\ell$. This implies that $\mu$ is not a core. So we assume $j \neq i+1$. Then $\widetilde{f}_{i}^{\varphi-1} \lambda$ has at least one normal $i$-box $n_2$ and exactly one conormal $i$-box $n_3$ with $n_3$ left of $n_2$ left of $n_1$. The hook length of the box in the column of $n_3$ and the row of $n_2$ is divisible by $\ell$, but the hook length of the box in the column of $n_3$ and the row of $n_1$ is not (by Remark \ref{residue_remark}). By Theorem \ref{maintheorem}, $\widetilde{f}_{i}^{\varphi-1} \lambda$ is not an $\ell$-partition. 

\item By (a), we can assume that the first row has a conormal $i$-box. If $\varphi \neq r+1$ then row $r+2$ of  $\widetilde{f}_{i}^{\varphi-1} \lambda$ will end in a $j$-box, for some $j \neq i$. Call this box $n_1$. Also let $n_2$ be any normal $i$-box in $\widetilde{f}_{i}^{\varphi-1} \lambda$ and $n_3$ be the unique conormal $i$-box. Then the box in the row of $n_1$ and column of $n_3$ has a hook length which is not divisible by $\ell$, but the box in the row of $n_2$ and column of $n_3$ has a hook length which is (by Remark \ref{residue_remark}). By Theorem \ref{maintheorem}, $\widetilde{f}_{i}^{\varphi-1} \lambda$ is not an $\ell$-partition.

\item Suppose $\kappa_{r+1} \neq 0$. By $(a)$ and $(b)$, we can assume that $\varphi = r+1$ and that the first row of $\lambda$ has a conormal $i$-box. Then the difference between $\lambda$ and $\widetilde{f}_{i}^{\varphi-1} \lambda = \widetilde{f}_i^r \lambda$ is an added box in each of the first $r$ rows. Remove $\kappa_r - \kappa_{r+1} +1$ horizontal $\ell$-rim hooks from row $r$ of $\widetilde{f}_i^{\varphi-1} \lambda$. Call the remaining partition $\nu$. Then $\nu_r = \nu_{r+1} = \mu_1 + \ell \kappa_{r+1}$. Hence a removable non-horizontal $\ell$-rim hook exists in $\nu$ taking the rightmost box from row $r$ with the rightmost $\ell-1$ boxes from row $r+1$. Thus $\widetilde{f}_{i}^{\varphi-1}(\lambda)$ is not an $\ell$-partition.
\end{enumerate}

To prove \eqref{second}, we note that by Theorem \eqref{other_cases} that if $\varphi \neq 0$ and $\varepsilon>1$ then $\widetilde{e}_{i} \lambda = \widetilde{e}_{i}^2 \widetilde{f}_{i} \lambda$ cannot be an $\ell$-partition. Hence we only consider $\lambda$ so that $\varphi_i(\lambda) = 0$. But then $\widetilde{f}_{i}^{\varphi_i(\widetilde{e}_{i}^{\varepsilon}(\lambda))-1} \widetilde{e}_{i}^{\varepsilon} \lambda = \widetilde{f}_{i}^{\varepsilon-1} \widetilde{e}_{i}^{\varepsilon} \lambda = \widetilde{e}_{i} \lambda$. From this observation, it is enough to show that $\lambda$ satisfies $(\ddagger)$ if and only if $\widetilde{e}_{i}^{\varepsilon} \lambda$ satisfies ($\dagger$). The proof of this follows a similar line as the above proofs, so it will be left to the reader.
\end{proof}

\begin{example} Fix $\ell = 3$. Let $\lambda = (9,4,2,1,1) \approx ((2,1,1), 2, (1))$.
$$
\begin{array}{cc} \lambda = &
\tableau{0&1&2&0&1&2&0&1&2 \\ 2&0&1&2\\1&2\\0\\2}
\end{array}$$ 
Here $\varphi_0 (\lambda) = 3$. $\widetilde{f}_{0} \lambda = (10,4,2,1,1)$ is not a $3$-partition, but $\widetilde{f}_{0}^{2} \lambda = (10,5,2,1,1) \approx ((2,2,1,1), 1, (2,1))$ and $\widetilde{f}_{0}^{3} \lambda = (10,5,3,1,1) \approx ((1,1), 3, (1))$ are $3$-partitions.
\end{example}

\section{A representation-theoretic proof of Theorem \ref{top_and_bottom}}\label{new_proof}
This proof relies heavily on the work of Grojnowski, Kleshchev et al. We recall some notation from \cite{G} but repeat very few definitions below. 

\subsection{Definitions and preliminaries}
In the category $Rep_n$ of finite-dimensional representations of the finite Hecke algebra $H_n(q)$, we define the Grothendieck group $K(Rep_n)$ to be the group generated by isomorphism classes of finite-dimensional representations, with relations $[\mathcal{M}_1] +[\mathcal{M}_3] = [\mathcal{M}_2]$ if there exists an exact sequence $0 \to \mathcal{M}_1 \to \mathcal{M}_2 \to \mathcal{M}_3 \to 0$. This is a finitely generated abelian group with generators corresponding to the irreducible representations of $H_n(q)$. The equivalence class corresponding to the module $\mathcal{M}$ is denoted $[\mathcal{M}]$.

Just as $S_n$ can be viewed as the subgroup of $S_{n+1}$ consisting of permutations which fix $n+1$, $H_n(q)$ can be viewed as a subalgebra of $H_{n+1}(q)$ (the generators $T_1, T_2, \dots ,T_{n-1}$ generate a subalgebra isomorphic to $H_n(q)$). Let $\mathcal{M}$ be a finite-dimensional representation of $H_{n+1}(q)$. Then it makes sense to view $\mathcal{M}$ as a representation of $H_n(q)$. This module is called the \textit{restriction of }$\mathcal{M}$\textit{ to } $H_n(q)$, and is denoted $Res^{H_{n+1}(q)}_{H_n(q)} \mathcal{M}$.
Similarly, we can define the induced representation of $\displaystyle \mathcal{M}$ by $ Ind^{H_{n+1}(q)}_{H_n(q)} \mathcal{M} = H_{n+1}(q) \otimes_{H_n(q)} \mathcal{M}$. Just as $S_b \subset S_a$, we can also consider $H_b(q) \subset H_a(q)$ and define corresponding restriction and induction functors.
To shorten notation, $Res^{H_{a}(q)}_{H_b(q)}$ will be written $Res_b^a$, and $Ind^{H_{a}(q)}_{H_b(q)}$ will be written as $Ind_b^{a}$.

If $\lambda$ and $\mu$ are partitions, it is said that $\mu$ covers $\lambda$, (written $\mu \succ \lambda$) if the Young diagram of $\lambda$ is contained in the Young diagram of $\mu$ and $|\mu| = |\lambda| +1$. 

The following proposition  is well known and can be found in \cite{M}. 
\begin{proposition}\label{branching_rule} Let $\lambda$ be a partition of $n$ and $S^{\lambda}$ be the Specht module corresponding to $\lambda$. Then $$\displaystyle [Ind^{n+1}_{n} S^{\lambda}] =  \sum_{\mu \succ \lambda} [S^{\mu}].$$
\end{proposition}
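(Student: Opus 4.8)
The statement to prove is the branching rule for induction of Specht modules over the finite Hecke algebra: $[\mathrm{Ind}^{n+1}_n S^\lambda] = \sum_{\mu \succ \lambda} [S^\mu]$.

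\medskip

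\textbf{Proposed approach.} The plan is to reduce this to the known Specht module filtration of induced modules for Hecke algebras, exactly parallel to the classical symmetric group branching rule. First I would recall that the induced module $\mathrm{Ind}^{n+1}_n S^\lambda = H_{n+1}(q) \otimes_{H_n(q)} S^\lambda$ carries a natural filtration whose successive quotients are Specht modules, a fact established by Dipper and James in their construction of Specht modules for Hecke algebras (\cite{DJ}); this is the Hecke-algebra analogue of the Young-type branching rule. The key point is that the associated graded object, as a class in the Grothendieck group $K(Rep_{n+1})$, is independent of the filtration, so $[\mathrm{Ind}^{n+1}_n S^\lambda]$ equals the sum of the classes of these Specht subquotients.

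\medskip

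\textbf{Key steps in order.} (1) Identify the indexing set of the filtration: the Specht modules appearing are precisely those $S^\mu$ with $\mu$ obtained from $\lambda$ by adding a single box, i.e. $\mu \succ \lambda$ in the notation of the excerpt. This is purely combinatorial and follows from the way the permutation/tabloid module decomposes. (2) Verify multiplicities are one: each $\mu \succ \lambda$ occurs exactly once as a subquotient, since there is a unique box of $\mu$ not in $\lambda$. (3) Pass to the Grothendieck group: since $[\,\cdot\,]$ is additive on short exact sequences, a filtration $0 = M_0 \subset M_1 \subset \cdots \subset M_k = \mathrm{Ind}^{n+1}_n S^\lambda$ with $M_j/M_{j-1} \cong S^{\mu^{(j)}}$ gives $[\mathrm{Ind}^{n+1}_n S^\lambda] = \sum_j [S^{\mu^{(j)}}] = \sum_{\mu \succ \lambda}[S^\mu]$. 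One should take care that, since $q$ is a root of unity, Specht modules need not be irreducible, so the argument must be phrased entirely at the level of the Grothendieck group (where relations come from \emph{all} short exact sequences, not just composition series) rather than in terms of composition factors.

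\medskip

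\textbf{Main obstacle.} The substantive content is entirely imported from \cite{DJ}: the existence of the Specht filtration of the induced module. Modulo citing that, the proof is a short bookkeeping argument. So the "hard part" here is really just correctly invoking the Dipper--James theory and making sure the combinatorial description of which $\mu$ appear is stated correctly (adding one box, each with multiplicity one); no genuinely new argument is needed. If one preferred a self-contained route, the alternative would be to use the cellular structure of $H_{n+1}(q)$ (Graham--Lehrer / Murphy basis) to exhibit the filtration explicitly, but that is more machinery than the statement warrants in this context.
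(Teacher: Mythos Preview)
Your approach is correct and is in fact the standard argument: the Dipper--James Specht filtration of $\mathrm{Ind}^{n+1}_n S^\lambda$ with successive quotients $S^\mu$ for $\mu \succ \lambda$, passed to the Grothendieck group. The paper itself gives no proof of this proposition; it is stated without argument in Section~\ref{new_proof} as a known preliminary fact (alongside other results quoted from \cite{G} and \cite{DJ}), so there is nothing to compare against beyond noting that your citation of \cite{DJ} is exactly the source the paper implicitly relies on.
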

We consider functors $\widetilde{e}_{i}: Rep_n \to Rep_{n-1}$ and $\widetilde{f}_{i}: Rep_n \to Rep_{n+1}$ which commute with the crystal action on partitions in the following sense (see \cite{G} for definitions and details).
\begin{theorem}\label{tilda} Let $\lambda$ be an $\ell$-regular partition. Then:
\begin{enumerate}

\item $\widetilde{e}_{i} D^{\lambda} = D^{\widetilde{e}_{i} \lambda}$;
\item\label{f_tilda} $\widetilde{f}_{i} D^{\lambda} = D^{\widetilde{f}_{i} \lambda}$.
\end{enumerate}
\end{theorem}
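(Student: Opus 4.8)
The plan is not to reprove Theorem \ref{tilda} from scratch --- it is one of the foundational results tying the modular representation theory of $H_n(q)$ to crystal bases --- but to assemble it from three ingredients, following \cite{G} and \cite{Kl}. First I would recall Grojnowski's refinement of $i$-induction and $i$-restriction: writing $e_i$, $f_i$ for the functors cutting out the generalized $i$-eigenspace of $\mathrm{Res}^n_{n-1}$ and of $\mathrm{Ind}^{n+1}_n$ respectively, one sets $\widetilde{e}_i \mathcal{M} := \mathrm{soc}(e_i \mathcal{M})$ and $\widetilde{f}_i \mathcal{M} := \mathrm{head}(f_i \mathcal{M})$. The crucial input at this stage, due to Grojnowski and Kleshchev, is that when $\mathcal{M} = D^\lambda$ is irreducible these modules are again irreducible or zero --- this is the modular branching rule --- and that $\varepsilon_i([D^\lambda]) := \max\{k : \widetilde{e}_i^{\,k} D^\lambda \neq 0\}$ is finite.

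Next I would check that $\widetilde{e}_i$, $\widetilde{f}_i$, together with these statistics $\varepsilon_i$ and the analogously defined $\varphi_i$, make the set $\{[D^\lambda] : \lambda \text{ an } \ell\text{-regular partition}\} \subset \bigsqcup_n K(Rep_n)$ into an abstract $\widehat{\mathfrak{sl}_\ell}$-crystal: the weight of $[D^\lambda]$ is $\Lambda_0 - \sum_i m_i(\lambda)\,\alpha_i$, where $m_i(\lambda)$ is the number of boxes of residue $i$ in $\lambda$, and the crystal axioms follow from adjunction and exactness properties of $e_i$, $f_i$ on $Rep_n$. This crystal is connected with a unique source, namely $[D^\emptyset]$, the trivial module of $H_0(q)$, which has weight $\Lambda_0$.

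The last step is a uniqueness argument. A connected $\widehat{\mathfrak{sl}_\ell}$-crystal with a unique highest weight element of weight $\Lambda_0$ (satisfying the relevant local axioms, which once again come out of the functorial setup) must be isomorphic to the abstract crystal $B(\Lambda_0)$. By the Misra--Miwa description recalled earlier (see \cite{MM}), $B(\Lambda_0)$ is also realized on $\ell$-regular partitions with the combinatorial operators $\widetilde{e}_i$, $\widetilde{f}_i$, again with unique source $\emptyset$. Composing the two identifications with $B(\Lambda_0)$ yields a crystal isomorphism between $\{[D^\lambda]\}$ and the combinatorial crystal; since a morphism of connected crystals is determined by where it sends the highest weight element, and $[D^\emptyset]$ must go to $\emptyset$, this isomorphism is forced to send $[D^\lambda]$ to $\lambda$ for every $\ell$-regular $\lambda$. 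Unwinding the definitions of the two incarnations of $\widetilde{e}_i$, $\widetilde{f}_i$ then gives exactly $\widetilde{e}_i D^\lambda = D^{\widetilde{e}_i \lambda}$ and $\widetilde{f}_i D^\lambda = D^{\widetilde{f}_i \lambda}$ (reading $D^0 = 0$).

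The hard part will be the first ingredient: proving that the refined functors send irreducibles to irreducibles and identifying $\varepsilon_i$, $\varphi_i$ combinatorially is the substantial work of Grojnowski and Kleshchev, and in our setting (characteristic zero, $q$ a primitive $\ell$th root of unity) it also rests on the Lascoux--Leclerc--Thibon--Ariki description of decomposition numbers via canonical bases. I would invoke \cite{G} and \cite{Kl} for these facts rather than attempt to reprove them.
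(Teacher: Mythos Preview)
The paper does not actually prove this theorem: it is stated in the preliminaries of Section~\ref{new_proof} as a known result, with a pointer to \cite{G} for definitions and details, and is then used as a black box in the representation-theoretic proof of Theorem~\ref{top_and_bottom}. So there is no ``paper's own proof'' to compare against.

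Your outline is a fair summary of how the result is obtained in the literature you cite (Grojnowski \cite{G}, Kleshchev \cite{Kl}): define $\widetilde{e}_i$, $\widetilde{f}_i$ via socle and head of the refined restriction and induction functors, verify they endow the set of simples with an abstract highest-weight $\widehat{\mathfrak{sl}_\ell}$-crystal structure with source $[D^\emptyset]$ of weight $\Lambda_0$, and then invoke uniqueness of $B(\Lambda_0)$ together with the Misra--Miwa combinatorial model to match labels. One small correction: the identification of the two crystals does \emph{not} require the Lascoux--Leclerc--Thibon--Ariki theorem on decomposition numbers. Grojnowski's argument is purely structural and characteristic-free; Ariki's theorem is a much finer statement about canonical bases and is not needed here. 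Otherwise your plan is sound, and indeed more detailed than what the paper itself provides.
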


We now consider the functors $f_i: Rep_n \to Rep_{n+1} $ and $e_i: Rep_n  \to Rep_{n-1} $ which refine induction and restriction (for a definition of these functors, especially in the more general setting of cyclotomic Hecke algebras, see \cite{G}). For a representation $\mathcal{M} \in Rep_n $ let $\varepsilon_i (\mathcal{M}) = max\{k : {e}_{i} ^k \mathcal{M} \neq 0\}$ and 
$\varphi_i (\mathcal{M}) = max\{k : {f}_{i}^k \mathcal{M} \neq 0\}$. Grojnowski concludes the following theorem.
\begin{theorem}\label{groj} Let $\mathcal{M}$ be a finite-dimensional representation of $H_n(q)$. Let $\varphi = \varphi_i (\mathcal{M})$ and $\varepsilon = \varepsilon_i (\mathcal{M})$.
\begin{enumerate}
\item\label{ind} $Ind_{n}^{n+1} \mathcal{M} =
 \bigoplus_i f_i  \mathcal{M}; \,\,\,\,\,\:\:\:\;\;\;\;\;\;  Res_{n}^{n+1} \mathcal{M} = 
 \bigoplus_i e_i \mathcal{M};$
\item\label{f^phi} $[f_i^{\varphi} \mathcal{M}] = \varphi ! [\widetilde{f}_i^{\varphi} \mathcal{M}]; \,\,\,\,\,\,\,\,\,\,\,\,\:\:\: \,\,\,\,\,\,\,\:\:\: \,\,\,\,\,\,\,\:\:\: [e_i^{\varepsilon} \mathcal{M}] = \varepsilon ! [\widetilde{e}_i^{\varepsilon} \mathcal{M}]. $ 
\end{enumerate}
\end{theorem}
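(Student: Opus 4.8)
The plan is as follows; since this is Grojnowski's theorem (\cite{G}), I will describe the mechanism behind it rather than reproduce his argument in full. The ambient object is the affine Hecke algebra $\widehat{H}_{n+1}$, of which $H_{n+1}^{\alpha}(q)$ is a cyclotomic quotient. In the Bernstein presentation, $\widehat{H}_{n+1}$ contains a commutative subalgebra generated by invertible elements $X_1,\dots,X_{n+1}$, with $X_1,\dots,X_n$ already lying in $H_n^{\alpha}(q)$ and $X_{n+1}$ commuting with all of $H_n^{\alpha}(q)$. Any finite-dimensional $H_{n+1}^{\alpha}(q)$-module $\mathcal{M}$ splits canonically as a direct sum of generalized simultaneous eigenspaces for the $X_j$; because $q$ is a primitive $\ell$-th root of unity and $\mathcal{M}$ comes from a cyclotomic quotient, every eigenvalue of $X_{n+1}$ on $\mathcal{M}$ lies in $\{q^0,q^1,\dots,q^{\ell-1}\}$. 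This is the source of the residue labelling $i\in\mathbb{Z}/\ell\mathbb{Z}$, and of both parts of the theorem.

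For part \eqref{ind}: one defines $e_i^{\alpha}\mathcal{M}$ to be the generalized $q^i$-eigenspace of $X_{n+1}$ acting on $Res_n^{n+1}\mathcal{M}$. Since $X_{n+1}$ commutes with $H_n^{\alpha}(q)$, each eigenspace is an $H_n^{\alpha}(q)$-submodule, so $Res_n^{n+1}\mathcal{M}=\bigoplus_i e_i^{\alpha}\mathcal{M}$ is simply the generalized eigenspace decomposition. Dually $X_{n+1}$ acts on $Ind_n^{n+1}\mathcal{M}=H_{n+1}^{\alpha}(q)\otimes_{H_n^{\alpha}(q)}\mathcal{M}$, one sets $f_i^{\alpha}\mathcal{M}$ equal to its generalized $q^i$-eigenspace, and $Ind_n^{n+1}\mathcal{M}=\bigoplus_i f_i^{\alpha}\mathcal{M}$ follows the same way, with finiteness and invertibility of $X_{n+1}$ ruling out other eigenvalues. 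This settles \eqref{ind}.

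For part \eqref{f^phi} the additional ingredient is a divided-power identity. The $\varphi$-fold $i$-induction $(f_i^{\alpha})^{\varphi}\mathcal{M}$ carries a commuting action of the Hecke algebra $H_{\varphi}(q)$ of $S_{\varphi}$ (on the $\varphi$ new strands); refining by its one-dimensional trivial idempotent produces a functor $f_i^{(\varphi)}$ with $[(f_i^{\alpha})^{\varphi}\mathcal{M}]=\varphi!\,[f_i^{(\varphi)}\mathcal{M}]$, the factorial being $\dim H_{\varphi}(q)$ relative to its trivial component. One then identifies $f_i^{(\varphi)}$ at the top of an $i$-string with the crystal operator: when $\varphi=\varphi_i(\mathcal{M})$ and $\mathcal{M}=D^{\lambda}$ is irreducible, $f_i^{(\varphi)}D^{\lambda}$ is irreducible and equal to $D^{\widetilde f_i^{\varphi}\lambda}$, which is the functorial lift of Theorem \ref{tilda}; the general $\mathcal{M}$ is deduced from the irreducible case. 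Combining, $[f_i^{\varphi}\mathcal{M}]=\varphi!\,[f_i^{(\varphi)}\mathcal{M}]=\varphi!\,[\widetilde f_i^{\varphi}\mathcal{M}]$, and the statement for $e$ is the mirror image.

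The main obstacle is this last identification, $[f_i^{(\varphi)}D^{\lambda}]=[D^{\widetilde f_i^{\varphi}\lambda}]$ at the top of the $i$-string --- equivalently, that $D^{\widetilde f_i^{\varphi}\lambda}$ occurs in $f_i^{\varphi}D^{\lambda}$ with multiplicity exactly $\varphi!$ and every other composition factor has strictly smaller $\varepsilon_i$. Establishing this requires the full Grojnowski--Kleshchev apparatus: exactness and adjunction properties of the functors $e_i$ and $f_i$, Mackey-type commutation relations between them, and the socle/head analysis of $i$-induction. I would isolate that as the single deep step and cite \cite{G} for it, since a self-contained argument would essentially reprove a substantial part of his paper.
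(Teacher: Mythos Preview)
The paper does not prove this theorem at all: it is stated as a result of Grojnowski and simply cited to \cite{G} (introduced with ``Grojnowski concludes the following theorem''). So there is no ``paper's own proof'' to compare against; your sketch already goes further than the paper does, and correctly identifies the statement as a quotation from the literature rather than something to be proved here.

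That said, a brief comment on your sketch for its own sake. Your account of part~\eqref{ind} --- generalized eigenspace decomposition for $X_{n+1}$ on restriction, and the dual picture on induction --- is the standard and correct explanation. For part~\eqref{f^phi} your outline is on the right track but the mechanism you describe for the factorial is slightly off: the commuting algebra acting on $(f_i^{\alpha})^{\varphi}$ is not $H_{\varphi}(q)$ at the same root of unity (which need not be semisimple), and the divided power does not arise from a ``trivial idempotent'' of that algebra. In Grojnowski's treatment the $\varphi!$ comes from a filtration/socle argument together with the adjunction between $e_i$ and $f_i$ and the computation of $\varepsilon_i$ on $f_i^{\varphi}$ of an irreducible; the upshot $[f_i^{\varphi}D^{\lambda}] = \varphi!\,[D^{\widetilde f_i^{\varphi}\lambda}]$ when $\varphi=\varphi_i(\lambda)$ is exactly what you flag as the deep step, and citing \cite{G} for it is the right call.
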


For a module $D^{\mu}$ the \textit{central character} $\chi(D^{\mu})$ can be identified with the multiset of residues of the partition $\mu$. The following theorem allows us to define $\chi(S^{\mu})$ as well.
\begin{theorem}\label{central}
All composition factors of the Specht module $S^{\lambda}$ have the same central character.
\end{theorem}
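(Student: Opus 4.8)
\textbf{Proof proposal for Theorem \ref{central}.}

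The plan is to prove this by relating the central characters of composition factors of $S^\lambda$ to the formal character of $S^\lambda$ in the Grothendieck group, using the block decomposition of $H_n(q)$. First I would recall that for the Hecke algebra (and more generally the cyclotomic Hecke algebras), the blocks are governed by central characters: two irreducible modules $D^\mu$ and $D^{\mu'}$ lie in the same block if and only if $\chi(D^\mu) = \chi(D^{\mu'})$, i.e. if and only if $\mu$ and $\mu'$ have the same multiset of residues mod $\ell$. This is the ``Nakayama conjecture'' type statement for Hecke algebras (proved by Grojnowski, or alternatively following from the theory of the affine Hecke algebra and its center). So it suffices to show that $S^\lambda$ is an indecomposable object of the block category, or at least that all its composition factors live in a single block.

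The key step is then to observe that the Specht module $S^\lambda$ is a \emph{cyclic} module — indeed it is generated by a single vector (the image of the polytabloid, or in the Dipper--James construction a distinguished generator). A cyclic module cannot have composition factors in two distinct blocks: if it did, applying the central idempotent $e_B$ of one block $B$ would give a proper nonzero direct summand $e_B S^\lambda$, and since $S^\lambda = e_B S^\lambda \oplus (1-e_B)S^\lambda$ with both summands nonzero, the generator would decompose, contradicting that a cyclic module over an algebra with central idempotent splitting must have its generator lie in one summand... more carefully: $S^\lambda = \bigoplus_B e_B S^\lambda$, and the generating vector $v$ writes as $v = \sum_B e_B v$; the submodule generated by $v$ is $\bigoplus_B H_n(q)\, e_B v$, and this equals all of $S^\lambda$ only if $H_n(q) e_B v = e_B S^\lambda$ for every $B$ with $e_B S^\lambda \neq 0$. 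That is automatic, so cyclicity alone does not immediately finish it — I need instead that $S^\lambda$ has a \emph{simple head} or is \emph{indecomposable}. In fact $S^\lambda$ has $D^\lambda$ as its unique simple quotient (stated in the excerpt), hence $S^\lambda$ is indecomposable, hence it lies in a single block, hence all its composition factors share the central character, which must therefore equal $\chi(D^\lambda)$, the residue multiset of $\lambda$.

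So the cleaner route, and the one I would actually write, is: (1) recall $S^\lambda$ is indecomposable because it has simple head $D^\lambda$ (Dipper--James, already cited); (2) recall that the center of $H_n(q)$ acts on each indecomposable module through a single central character, and that on $D^\mu$ this character is identified with the residue multiset of $\mu$ (Grojnowski, the sentence preceding the theorem); (3) conclude that every composition factor $D^\mu$ of $S^\lambda$ has $\chi(D^\mu) = \chi(S^\lambda) = \chi(D^\lambda)$. The main obstacle is purely one of citing the right black box: one needs the fact that central characters are constant on indecomposables \emph{and} that $\chi$ for simples is computed by residues; both are in Grojnowski's framework, so there is essentially no hard content here beyond assembling these facts. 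If one wanted a self-contained argument avoiding the block theory, the alternative is to induct on $n$ using the branching rule (Proposition \ref{branching_rule}) together with Theorem \ref{groj}\eqref{ind}: restriction of $S^\lambda$ decomposes along residues $i$ as $\bigoplus_i e_i^\alpha S^\lambda$, and each $e_i^\alpha S^\lambda$ has composition factors among the $S^\mu$ with $\mu$ obtained by removing an $i$-box; an inductive hypothesis then pins down the central character additively. I expect the block-theoretic one-liner to be what the authors intend.
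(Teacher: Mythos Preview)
The paper does not prove this theorem at all; it is stated as a background fact imported from Grojnowski's framework \cite{G} (and ultimately from the Dipper--James/Murphy theory of Jucys--Murphy elements). So there is no ``paper's own proof'' to compare against.

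Your proposal is on the right track for $\ell$-regular $\lambda$, but it has a genuine gap in the generality the paper needs. The theorem is stated---and in Section~\ref{new_proof} is used---for \emph{arbitrary} partitions $\lambda$ (the $\mu_F$ arising in the covering chains $\mu_F\succ\cdots\succ\mu_1\succ\lambda$ are not assumed $\ell$-regular). Your key step, ``$S^\lambda$ has $D^\lambda$ as its unique simple quotient, hence is indecomposable,'' is only available when $\lambda$ is $\ell$-regular; for $\ell$-singular $\lambda$ there is no $D^\lambda$. One might try to patch this by citing indecomposability of Specht modules in general, but that also fails: when $\ell=2$ (i.e.\ $q=-1$) there exist decomposable Specht modules, so indecomposability cannot be the underlying mechanism for the theorem.

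The argument that works uniformly, and is what the Grojnowski/Dipper--James literature actually provides, bypasses indecomposability entirely. The Jucys--Murphy elements $L_1,\dots,L_n$ act upper-triangularly on the standard-tableau basis of $S^\lambda$ with diagonal entries $q^{c_T(k)}$; hence any symmetric polynomial in the $L_k$ (and these span the center) acts upper-triangularly with \emph{constant} diagonal, that constant being determined by the content multiset of $\lambda$. So each central element has a single generalized eigenvalue on $S^\lambda$, and therefore acts by that same scalar on every composition factor. This is closer in spirit to the inductive/branching alternative you sketched at the end than to your headline block-theoretic argument.
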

\begin{theorem}\label{character}
$\chi(f_i(D^{\lambda})) = \chi(D^{\lambda}) \cup \{ i \}; \;\;\;\;\;\;\;\; \chi(e_i(D^{\lambda})) = \chi(D^{\lambda}) \setminus  \{ i \}$. 
\end{theorem}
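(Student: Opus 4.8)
This statement is essentially a bookkeeping consequence of the block theory of $H_n(q)$ recalled from \cite{G}, and the plan is to spell it out via the branching rule so that it only rests on results already stated here. Recall from \cite{G} that $f_i$ is the direct summand of the induction functor $Ind_n^{n+1}$ that projects onto the blocks whose central character is obtained from that of the input by adjoining an $i$, so that $Ind_n^{n+1}=\bigoplus_i f_i$; dually $Res_{n-1}^{n}=\bigoplus_i e_i$, with $e_i$ projecting onto the blocks obtained by deleting an $i$. Since induction (tensoring with $H_{n+1}(q)$, which is free as a right $H_n(q)$-module) and restriction are exact, each $f_i$ and each $e_i$ is exact.

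First I would establish the $f_i$ identity. By Proposition \ref{branching_rule}, $[Ind_n^{n+1}S^\lambda]=\sum_{\mu\succ\lambda}[S^\mu]$, and by Theorem \ref{central} each $S^\mu$ lies in the single block whose central character is the residue multiset $\{\,b-a\bmod\ell:(a,b)\in\mu\,\}$ of $\mu$. Consequently the splitting $Ind_n^{n+1}=\bigoplus_i f_i$ sorts this sum by the residue of the added box: $[f_iS^\lambda]=\sum[S^\mu]$ over those $\mu\succ\lambda$ for which $\mu\setminus\lambda$ has residue $i$, and for every such $\mu$ one has $\chi(S^\mu)=\chi(S^\lambda)\cup\{i\}$. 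Hence every composition factor of $f_iS^\lambda$ has central character $\chi(S^\lambda)\cup\{i\}$. Applying the exact functor $f_i$ to the surjection $S^\lambda\twoheadrightarrow D^\lambda$ gives a surjection $f_iS^\lambda\twoheadrightarrow f_iD^\lambda$, so every composition factor of $f_iD^\lambda$ occurs in $f_iS^\lambda$ and thus has central character $\chi(S^\lambda)\cup\{i\}=\chi(D^\lambda)\cup\{i\}$ (the last equality because $D^\lambda$ is itself a composition factor of $S^\lambda$). This is exactly $\chi(f_i(D^\lambda))=\chi(D^\lambda)\cup\{i\}$.

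The $e_i$ identity follows in the same way from the restriction branching rule $[Res_{n-1}^{n}S^\lambda]=\sum_{\nu\prec\lambda}[S^\nu]$, the splitting $Res_{n-1}^{n}=\bigoplus_i e_i$, and Theorem \ref{central}: one gets $[e_iS^\lambda]=\sum[S^\nu]$ over those $\nu\prec\lambda$ with $\lambda\setminus\nu$ of residue $i$, for which $\chi(S^\nu)=\chi(S^\lambda)\setminus\{i\}$, and then applies the exact functor $e_i$ to $S^\lambda\twoheadrightarrow D^\lambda$ to obtain $\chi(e_i(D^\lambda))=\chi(D^\lambda)\setminus\{i\}$; alternatively one can deduce it from the $f_i$ case using the twisted adjunction between $e_i$ and $f_i$. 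There is no real obstacle of a combinatorial or algebraic nature here: the one substantive input is the block classification of $H_n(q)$ in terms of residue multisets (which is what makes Theorem \ref{central}, and the block-sorting of the branching rule, legitimate) together with the description of $f_i,e_i$ as the corresponding block refinements of induction and restriction, and this is precisely the machinery already being imported from \cite{G}. The step most likely to require care is simply checking that these cited facts are being applied in the correct form, after which everything reduces to the triviality that adding or removing a box of residue $i$ changes the residue multiset by one copy of $i$.
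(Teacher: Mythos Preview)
The paper does not prove this statement; it is one of several facts imported from \cite{G} in Section~\ref{new_proof} without argument, so there is no ``paper's proof'' to compare against.

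Your argument is correct, but it is more elaborate than necessary. Your first paragraph already contains the entire proof: once you grant that $f_i$ is the direct summand of $Ind_n^{n+1}$ projecting onto the block with central character $\chi\cup\{i\}$ (this is indeed the characterization in \cite{G}, and is what makes Theorem~\ref{groj}\eqref{ind} a decomposition by central character), then for any module $M$ lying in a single block---in particular the simple module $D^\lambda$---the module $f_iM$ automatically lies in the block with character $\chi(M)\cup\{i\}$, and the theorem is immediate. The detour in your second paragraph through Specht modules, Proposition~\ref{branching_rule}, and the surjection $S^\lambda\twoheadrightarrow D^\lambda$ is valid but superfluous: it reproves for $D^\lambda$ what the block description of $f_i$ already gives for every module. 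That machinery would only be needed if you were trying to \emph{establish} the block characterization of $f_i$ from some other definition (e.g.\ via generalized eigenspaces of Jucys--Murphy elements), but you have simply assumed it at the outset.
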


We are now ready to present a representation-theoretic proof of Theorem \ref{top_and_bottom}, which states that if $\lambda$ is an $\ell$-partition lying anywhere on an $i$-string in the crystal $B(\Lambda_0)$, then the extreme ends of the $i$-string through $\lambda$ are also $\ell$-partitions.
\subsection{A representation-theoretic proof of Theorem \ref{top_and_bottom}}

\begin{proof}[Alternate Proof of Theorem \ref{top_and_bottom}] Suppose $\lambda$ is an $\ell$-partition and $| \lambda| = n$. Recall that by the result of James and Mathas \cite{JM} combined with Theorem \ref{maintheorem}, $S^{\lambda} = D^{\lambda}$ if and only if $\lambda$ is an $\ell$-partition. Let $F$ denote the number of addable $i$-boxes of $\lambda$ and let $\nu$ denote the partition corresponding to $\lambda$ plus all addable $i$-boxes.

First, we induce $S^{\lambda}$ from $H_n(q)$ to $H_{n+F}(q)$. Applying  Proposition \ref{branching_rule} $F$ times yields $$\displaystyle [Ind_n^{n+F} S^{\lambda}] = \sum_{\mu_{F} \succ \mu_{F-1} \succ \dots \succ \mu_1 \succ \lambda} [S^{\mu_{F}}].$$
Note $[S^{\nu}]$ occurs in this sum with coefficient $F !$ (add the $i$-boxes in any order), and everything else in this sum has different central character than $S^{\nu}$. Hence the direct summand which has the same central character as $S^{\nu}$ (i.e. the central character of $\lambda$ with $F$ more $i$'s) is $F! [S^{\nu}]$ in $K(Rep_{n+F})$. 

We next  
apply \eqref{ind} from Theorem \ref{groj} $F$ times to obtain 
$$[Ind_n^{n+F} D^{\lambda}] = \bigoplus_{i_1, \dots, i_{F}} [f_{i_1} \dots f_{i_{F}} {D^{\lambda}}].$$

\sloppy{The direct summand with central character $\chi(S^{\nu})$ is $[f_i^F D^{\lambda}]$ in $K(Rep_{n+F})$. Since $\lambda$ is an $\ell$-partition, $S^{\lambda} = D^{\lambda}$, so $Ind_n^{n+F} S^{\lambda} = Ind_n^{n+f} D^{\lambda}$ and we have shown that $F![S^{\nu}] = [f_i^F D^{\lambda}]$.

}

Since $S^{\lambda} = D^{\lambda}$ and $f_i^{F} D^{\lambda} \neq 0$, we know that $F \leq \varphi$. Similarly, since $Ind_n^{n+F+1} S^{\lambda}$ has no composition factors with central character $\chi(S^{\lambda}) \cup \{ \underbrace{i, i, \dots , i}_{F+1}\},$  we know that $F \geq \varphi$. Hence $F = \varphi$. 

By part \ref{f^phi} of Theorem \ref{groj},  $[(f_i)^{\varphi} D^{\lambda}] = \varphi ! [\widetilde{f}_i^{\varphi} D^{\lambda}]$. Then by Theorem \ref{tilda}, $[S^{\nu}] = [D^{\widetilde{f}_i^{\varphi}\lambda}]$. 

Since $F=\varphi$, $\nu = \widetilde{f}_i^{\varphi} \lambda$, so in particular $\nu$ is $\ell$-regular and $S^{\nu} = D^\nu$. 
Hence $\widetilde{f}_{i}^{\varphi} \lambda = \nu$ is an $\ell$-partition.

  The proof that $\widetilde{e}_{i}^{\varepsilon_i(\lambda)} \lambda$ is an $\ell$-partition follows similarly, with the roles of induction and restriction changed in Proposition \ref{branching_rule}, and the roles of $e_i$ and $f_i$ changed in Theorem \ref{groj}.

\end{proof}

We do not yet have representation-theoretic proofs of our other Theorems \ref{other_cases} and \ref{second_from_bottom}. We expect an analogue of Theorem \ref{other_cases}  to be true for the Hecke algebra over a field of arbitrary characteristic. In Theorem \ref{second_from_bottom} the conditions $(\dagger)$ and $(\ddagger)$ will change for different fields, so any representation-theoretic proof of this theorem should distinguish between these different cases.

\section{Related Literature}\label{conclusion}
We will end by mentioning some related work concerning $\ell$-partitions. Cossey, Ondrus and Vinroot (see \cite{COV}) have a construction for the case of the symmetric group in characteristic $p$ which is an analogue of our construction of $\ell$-partitions from Section \ref{construct}.
Fairly recent results of Fayers (\cite{F}) and Lyle (\cite{L}) give combinatorial conditions which characterize partitions $\lambda$ such that the corresponding Specht module $S^{\lambda}$ of $H_n(q)$ is irreducible when $q$ is a primitive $\ell^{th}$ root of unity, without the condition that $\lambda$ be $\ell$-regular and allowing the characteristic of the underlying field to be $p$. Such partitions are called $(\ell,p)$-JM partitions in \cite{F}, and can be viewed as $\ell$-singular analogues of $(\ell,0)$-Carter partitions. The PhD thesis of the first author presents results for $(\ell,0)$-JM partitions which are analogous to the theorems in this paper.

\bibliographystyle{amsalpha}

\begin{thebibliography}{A}

\bibitem{BJV} C. Berg, B. Jones and M. Vazirani \textit{A bijection on core partitions and a parabolic quotient of the affine symmetric group}. \textit{ArXiv Mathematics e-prints}, math.CO/0804.1380
\bibitem{COV} J.P. Cossey,  M. Ondrus,  and C.R. Vinroot, Constructing all irreducible Specht modules in a block of the symmetric group, \textit{ArXiv Mathematics e-prints}, math/0605654
 \bibitem{DJ}
R. Dipper and G. James, \textit{Representations of Hecke algebras of general linear groups,} Proc. LMS (3), \textbf{52} (1986), 20-52
\bibitem{F}
M. Fayers, Irreducible Specht modules for Hecke Algebras of Type A, \textit{Adv. Math.} \textbf{193} (2005) 438-452
\bibitem{G} I. Grojnowski, Affine $\widehat{ \mathfrak{sl_p}}$ controls the representation theory of the symmetric group and related Hecke algebras, \textit{ArXiv Mathematics e-prints}, math/9907129
\bibitem{JK}
G.D. James and A. Kerber, \textit{The Representation Theory of the Symmetric Group}, Encyclopedia of Mathematics, \textbf{16}, 1981.
\bibitem{JM}
G.D. James and A. Mathas, A q-analogue of the Jantzen-Schaper theorem, \textit{Proc. Lond. Math. Soc.}, \textbf{74} (1997), 241-274. 
 \bibitem{K} M. Kashiwara, On crystal bases, in Representations of groups (Banff 1994), \textit{CMS Conf. Proc.} \textbf{16} (1995), 155-197. 
 \bibitem{Kl} A. Kleshchev, \textit{Linear and Projective Representations of Symmetric Groups}, Cambridge Tracts in Mathematics \textbf{163}.
\bibitem{LLT} A. Lascoux, B. Leclerc, and J.-Y. Thibon, Hecke algebras at roots of unity and crystal bases of quantum affine algebras, \textit{Comm. Math. Phys. } \textbf{181} (1996), 205-263.
\bibitem{L} S. Lyle
Some q-analogues of the Carter-Payne theorem,  J. reine angew. Math.,\textbf{608} (2007),93--121

\bibitem{M} Mathas  \textit{Iwahori-Hecke algebras and Schur algebras of the symmetric group}, University lecture series, 15, AMS, 1999. 

 \bibitem{MM} K.C. Misra and T. Miwa, Crystal base for the basic representation of $U_q( \mathfrak{sl}_n )$, \textit{Comm. Math. Phys.} \textbf{134} (1990), 79-88.
\end{thebibliography}

\end{document}